\newtheorem{defin}{Definition}[section]
\newtheorem{prop}[defin]{Proposition}
\newtheorem{theorem}[defin]{Theorem}
\newtheorem{coroll}[defin]{Corollary}
\newtheorem{lemma}[defin]{Lemma}
\theoremstyle{definition}
\newtheorem{rem}[defin]{Remark}
\newcommand{\eps}{\varepsilon}
\newcommand{\ud}{\mathrm{d}}
\newcommand{\C}{\mathscr{C}}
\newcommand{\R}{\mathbb{R}}
\newcommand{\copyrightstatement}{
    \begin{textblock}{0.62}(0.19,0.93)    
         \noindent
         \footnotesize
         \copyright  The published version of this paper is available online (June 2019) on \emph{Journal of Differential Equations}.  DOI \url{10.1016/j.jde.2019.06.014}
    \end{textblock}
}
\begin{document}

 \copyrightstatement 

\title[A non-linear  PDE with a distributional coefficient]{A non-linear parabolic PDE with a distributional coefficient and its applications to stochastic analysis}
\author{Elena Issoglio}
\address{School of Mathematics, University of Leeds, LS2 9JT, Leeds, UK}
\email{e.issoglio@leeds.ac.uk}
\date{\today}

\begin{abstract}
We consider a non-linear parabolic partial differential equation (PDE) on $\mathbb R^d$ with a distributional coefficient in the non-linear term. The distribution is an element of a Besov space with negative regularity and the non-linearity is of  quadratic type in the gradient of the unknown. Under suitable conditions on the parameters we prove local existence and uniqueness of a mild solution to the PDE, and investigate properties like continuity with respect to the initial condition and blow-up times. 
We prove a global existence and uniqueness result assuming further properties on the non-linearity. To conclude we consider an application of the PDE to stochastic analysis, in particular to a class of non-linear  backward stochastic differential equations with distributional drivers.
\end{abstract}

\keywords{distributional coefficients, singular parabolic PDEs, singular BSDEs, quadratic parabolic PDEs, quadratic BSDEs, Besov spaces, local solution, global solution}
\subjclass[2010]{Primary 35K91; Secondary 60H10, 30H25, 35K67, 46F99}

\maketitle

\section{Introduction}\label{sc:setting of the problem}
In this paper we consider the following non-linear parabolic equation 
\begin{equation}\label{eq: PDE non lin iniziale}
\left\{\begin{array}{lcr}
\frac{\partial u(t,x)}{\partial t}=  \Delta u (t,x) + F( \nabla u(t,x))  b (t, x),&\quad&  x\in \mathbb R^d ,t   \in (0,T]\\
u(0,x)=u_0(x), && x\in \mathbb R^d
\end{array}\right.
\end{equation}
where $u:[0,T]\times \mathbb R^d \to \mathbb R$ is the unknown, $b:[0,T]\times \mathbb R^d \to \mathbb R$ is a given (generalised) function  and $u_0: \mathbb R^d \to \mathbb R$ is a suitable initial condition.
 Here the gradient operator $\nabla$ and the Laplacian $\Delta$ refer to the space component.  The term $F: \R^d \to \R$ is a non-linear map of quadratic type whose regularity will be specified below (see Assumption A1). 

In this paper we are interested in the case when the coefficient $b$ is highly singular in the space component, in particular we will consider bounded functions of time taking values in a suitable class of Schwartz  distributions, $b\in L^\infty([0,T]; \mathcal C^{\beta}(\mathbb R^d))$ for some $\beta\in (-1/2 ,0)$. Here $\mathcal C^\beta$ is a Besov space whose exact definition will be recalled later. 

The main motivation for looking at this kind of \emph{rough} equations with singular coefficients comes from Physics. In recent years there has been a great interest in the study of stochastic partial differential equations (SPDEs), fuelled by the success of the theories of regularity structures by Hairer \cite{hairer14} and of paracontrolled distributions by Gubinelli and coauthors \cite{gubinelli04, gubinelli-imkeller-perkowski, gubinelli-perkowski15}. These two theories allowed for the first time to study stochastic PDEs with very singular coefficients (such as  the Kardar-Parisi-Zhang equation, see \cite{hairer13}) which posed long standing problems. Amongst the many papers in the area of stochastic PDEs that build on these ideas, we mention a series of recent ones on quasilinear stochastic PDEs \cite{ballieul_et.al, furlan_gubinelli,  gerencser_hairer, otto_et.al, otto_weber} that may  be of interest  to the reader.

Also in the present paper we  consider a quasilinear PDE, but a deterministic one, where one of the  coefficients is \emph{singular} because it is a distribution. This coefficient however, is \emph{regular} enough to allow for \emph{Young-type} products to be used.  
This approach is the same in spirit as \cite{hinz_et.al, hinz_zahle, issoglio13, issoglio_zahle}, where the authors look for solutions to linear and non-linear parabolic PDEs for which  some of the coefficients are distributions that may arise as realisations of stochastic noises. The aim of these papers, as well as the present work,  is to solve such PDEs with \emph{classical} techniques and in particular without using any special properties of the coefficients that derive from the fact that they are the realization of a stochastic noise -- hence avoiding to use the machinery mentioned above for SPDEs. This of course will result in restrictions on the (ir)regularity of the distributional coefficient $b$ (which would play the role of the space-time noise in the SPDEs context). In the present paper, the non-linearity $F$ is assumed to be continuously differentiable with Lipschitz partial  derivatives, hence allowing for quadratic growth.
To the best of our knowledge this is the first time that existence and uniqueness of mild solutions for \eqref{eq: PDE non lin iniziale} is studied in the literature. It may be worth emphasizing that the key technical difficulty is that the non-linearity involves the gradient of the unknown (as for example in the Burger's equation). This is different to \cite{issoglio_zahle} where the non-linearity involves the solution itself. In both cases, the non-linear term is `multiplied' by the distributional coefficient.

 Our main result is {\em local existence and uniqueness of a mild solution} in $C([0,T]; \mathcal C^{\alpha+1})$, where $\alpha>0$ depends on $\beta$ (see Assumption A2 below). Here local solution means  either a solution with an arbitrary initial condition and a sufficiently small time $T$ (see Theorem \ref{thm: local fixed point for J}) or with an arbitrary time $T$ but a sufficiently small (in norm) initial condition (see Theorem \ref{thm: fixed point for J for small u0}).  Both theorems are proven  with a  fixed point argument and careful a-priori bounds  on the quadratic non-linearity $F$.  We also show continuity of the solution with respect to the initial condition (Proposition \ref{pr: continuity wrt u0}) and we start to investigate blow-up times for the solution (see Proposition \ref{pr: blow up}).  
 
The quadratic growth of the non-linearity $F$ is the main issue that prevents us from finding a global in time solution. Indeed, if we assume that $F$ is Lipschitz with sub-linear growth (see Assumption A4) then we can show {\em existence and uniqueness of a  global mild solution} in $ C^\eps([0,T]; \mathcal C^{\alpha+1})$ for some $\eps>0$ and  for all times $T<\infty$ (see Theorem \ref{thm:global}).

To conclude the paper we illustrate an application of  PDE \eqref{eq: PDE non lin iniziale}  to stochastic analysis, in particular to a class of non-linear backward stochastic differential equations (BSDEs) with singular coefficients. This example falls in the class of quadratic BSDEs and the novelty is  the presence of a distributional coefficient in the so-called \emph{driver} of the BSDE. The study of quadratic BSDEs has been initiated in 2000 by Kobylanski \cite{kobylanski}, while BSDEs with singular terms (mostly linear)  have started gaining attention only recently, see e.g.\ \cite{diehl_friz12, diehl_zhang17, IssoglioJing16, issoglio_russo}. To the best of our knowledge, the only paper that deals with {\em singular} quadratic BSDEs is \cite{eddahbi}, but the singular term is a linear stochastic integral with respect to a rough function, unlike in the present paper where the singularity appears in the quadratic term. 

\vspace{5pt}
The paper is organised as follows: In Section \ref{sc:preliminaries} we recall known results that will be needed later, including the definition of product between distributions and the definition of the function spaces used.  In Section \ref{sc: solving the PDE} we show useful properties of the integral operator appearing in the mild solution and show all necessary a priori bounds and contraction properties. Using those we prove the main result of local existence and uniqueness of a mild solution (Theorems \ref{thm: local fixed point for J} and \ref{thm: fixed point for J for small u0}). We also investigate continuity with respect to initial condition and blow-up of the solution. In Section \ref{sc:global} we study global existence and uniqueness of a mild solution (see Theorem \ref{thm:global}) under more restrictive assumptions on the non-linearity.
 Finally  in Section \ref{sc: applications} we apply these results to stochastic analysis, and give a meaning and solve a class of non-linear BSDEs with distributional coefficients.

For ease of reading we collect here some of the function spaces  used more often in this papers (and point the reader to the precise definition in the section below when needed). We have
\begin{itemize}
\item $C_TX:= C([0,T];X)$, that is the space of $X$-valued continuous functions  defined on $[0,T]$ for any Banach space $X$, see Section \ref{sc:preliminaries}
\item $L^\infty_TX:= L^\infty(0,T;X)$, that is the space of $X$-valued $L^\infty$-functions  defined on $[0,T]$ for any Banach space $X$
\item $\mathcal C^\gamma : = B^\gamma_{\infty, \infty}$, where the Besov spaces $B^\gamma_{p,q}$ are defined in \eqref{eq: Besov spaces alpha p q}
\item $C_T\mathcal C^{\alpha+1}$ is then a particular case (often used below) and this  is the space of continuous functions of time  defined on $[0,T]$ taking values in the Besov space  $\mathcal C^{\alpha+1}$
\item $ C^\eps_T\mathcal C^{\alpha+1}:= C^\eps([0,T];\mathcal C^{\alpha+1} )$ is the space of $\eps$-H\"older continuous functions on $[0,T]$ taking values in the Besov space  $\mathcal C^{\alpha+1}$, see Section \ref{sc:global}
\end{itemize}

\section{Preliminaries}\label{sc:preliminaries}

\subsection{Fractional Sobolev spaces, semigroups and products}\label{ssc: sobolev sp and semigroups}
We start by recalling the definition of Besov spaces $B^\gamma_{p,q}$ on $\mathbb R^d$ for $\gamma\in \mathbb R$ and $1 <p,q\leq \infty$. For more details see for example Triebel \cite[Section 1.1]{triebel10} or Gubinelli \cite[Appendix A.1]{gubinelli-imkeller-perkowski}. Let $\mathcal S'$ be the space of real valued  Schwartz distributions on $\mathbb R^d$. 
We denote by $|\cdot|_d$ the Euclidean norm in $\mathbb R^d$. For $x, y \in \mathbb R^d$ we write $x\cdot y$ to denote the scalar product in $\mathbb R^d$. 
Let us consider a dyadic partition of unity $\{\phi_j, j\geq 0\}$ with the following properties: the zero-th element is such that 
\[
\phi_0(x) = 1 \text{ if } |x|_d\leq 1 \quad \text{ and } \quad \phi_0(x) = 0 \text{ if } |x|_d\geq \frac32
\] 
and the rest satisfies 
\[
\phi_j (x) = \phi_0 (2^{-j}x)- \phi_0(2^{-j+1}x)  \text{ for } x\in \mathbb R^d \text{ and } j\in \mathbb N.
 \]
We define
\begin{equation}\label{eq: Besov spaces alpha p q}
B^\gamma_{p,q}:= \left\{ f\in \mathcal S' \, : \, \|f\|_{B^\gamma_{p,q}}:= \left( \sum_{j=0}^\infty 2^{\gamma j q} \|(\phi_j \hat f )^\vee\|_{L^p}^q \right)^{1/q}<\infty \right\},
\end{equation}
where $\hat \cdot $ and $()^\vee$ denote the Fourier transform and its inverse, respectively. If $q=\infty$ in \eqref{eq: Besov spaces alpha p q} we consider the usual modification of the norm as follows
\begin{equation*}
\|f\|_{B^\gamma_{p,\infty}}:= \sup_{j}  2^{\gamma j } \|(\phi_j \hat f )^\vee\|_{L^p}
\end{equation*}
In the special case where both $p=q=\infty$ in \eqref{eq: Besov spaces alpha p q}, we use a different notation for the Besov space, namely $\mathcal C^\gamma := B^\gamma_{\infty, \infty}$. The norm in this space will be denoted by $\|\cdot\|_\gamma$. Note that the norm depends on the choice of the dyadic partition of unity $\{\phi_j\}$ but the space $B^\gamma_{p,q}$ does not, and all norms defined with a different $\{\phi_j\}$ are equivalent.  In the case when $0<\gamma<1$ we will sometimes use yet another equivalent norm in $\mathcal C^\gamma$ which is given by 
\begin{equation}\label{eq: equivalent norm C alpha}
\sup_{x\in\mathbb R^d } \left( |f(x)| +  \sup_{0<|h|_d\leq 1}  \frac{|f(x+ h)-f(x) |}{|h|_d^{\gamma}}\right),
\end{equation}
see \cite[equation (1.22) with $m=1$]{triebel10}. 
Note moreover that for a non-integer  $\gamma>0$,  the space $\mathcal C^\gamma$ is the usual space of functions  differentiable $m$ times (with $m$ being the highest integer smaller than $\gamma$), with bounded partial derivatives up to order $m$ and whose partial derivatives of order $m$ are ($\gamma-m$)-H\"older continuous (see \cite[page 99]{bahouri}). 
On the other hand, if $\gamma<0$ then the space $\mathcal C^\gamma$ contains distributions. 
Besov spaces are well suited to give a meaning to multiplication between distributions. Indeed using Bony's estimates (see \cite{bony}) one can show that  for $f\in \mathcal C^\gamma$ and $g\in \mathcal C^\delta$ with $\gamma+\delta>0$ and $\delta<0$, then $fg$ exists as an element of $\mathcal C^\delta$ and
\begin{equation}\label{eq: Bony's estimates}
 \| fg  \|_\delta \leq c \| f\|_\gamma  \| g\|_\delta,
\end{equation}
for some constant $c>0$, see \cite[Lemma 2.1]{gubinelli-imkeller-perkowski} for more details and a proof.

For a Banach space $X$, let  $C_T X:= C([0,T]; X)$ denote  the space of $X$-valued continuous functions of time. This is a Banach space endowed with the usual supremum norm
$$\| u\|_{C_T X}:= \sup_{t\in[0,T]} \|u(t)\|_{X}$$
for  $u\in C_T X$.  
 On the same space  $C_T X$  we consider a family of equivalent norms $\|\cdot\|^{(\rho)}_{C_T X}, \rho\geq 1$ given by
\begin{equation}\label{eq: equivalent norm}
\|u\|^{(\rho)}_{C_T X} := \sup_{t\in [0,T]} e^{-\rho t}\|u(t)\|_X
\end{equation}
for  $u\in C_T X$. On the space $L_T^\infty X:= L^{\infty}([0,T]; X)$, where $X$ is a Banach space, we consider the norm  ${\mathrm{ess sup}}_{t\in[0,T]} \|f(t)\|_{X}$ for a function $f:[0,T]\to X $ and we denote it 	by $\| f \|_{L_T^\infty X}$.


It is useful to  rewrite  equation \eqref{eq: PDE non lin iniziale} as the following  abstract Cauchy problem
\begin{equation}\label{eq: PDE non lin Cauchy prb}
\left\{\begin{array}{lcl}
\frac{d u(t)}{d t}=  \Delta u(t) +F (\nabla u(t))  b(t) &\quad&   \text{on }  \mathbb R^d \times(0,T]\\
u(0)=u_0, &&
\end{array}\right.
\end{equation}
where now $u$ denotes a function  of time with values in an infinite dimensional space  that will be specified later. The same notation is applied to the field $b$. We are now ready to introduce explicitly  the notion of solution of \eqref{eq: PDE non lin iniziale} considered in this paper. 
\begin{defin}
We say that $u\in C_T \mathcal C^{\alpha +1}$ is a mild solution of \eqref{eq: PDE non lin iniziale}  or equivalently \eqref{eq: PDE non lin Cauchy prb} if it satisfies the following integral equation
\begin{equation}\label{eq: mild solution}
u(t)= P_t u_0+\int_{0}^t P_{t-s}\left(  F(\nabla u(s))  b(s) \right) \mathrm ds,
\end{equation}
where $\{P_t\}_{t\geq 0} $ is the heat semigroup acting on the product $ F(\nabla u(s))  b(s) $. 
\end{defin}
 The generator of  $\{P_t\}_{t\geq 0} $ is the Laplacian $\Delta$ and the semigroup acts on $\mathcal S'$ but as an operator it can be restricted to $\mathcal C^\gamma$ for any $\gamma$. It is known that the heat semigroup $P_t$ enjoys  useful properties as a mapping on the $\mathcal C^\gamma$-spaces, 
for example the well-known \emph{Schauder's estimates} (see e.g.~\cite[Lemma A.8]{gubinelli-imkeller-perkowski} or \cite[Prop.\ 2.4]{cannizzaro}) recalled in the following.
Let  $\theta\geq0 $ and $\gamma\in \mathbb R$. For any  $g\in \mathcal C^\gamma$ and $t>0$ then $P_t g\in \mathcal  C^{\gamma+2\theta}$ and 
\begin{equation}\label{eq: mapping Pt Besov spaces}
\|P_tg\|_{\gamma+2\theta} \leq c t^{-\theta} \|g\|_{\gamma}
\end{equation}
and
\begin{equation}\label{eq: mapping Pt-I Besov spaces}
\|(P_t-1)g\|_{\gamma-2\theta} \leq c |t|^{\theta} \|g\|_{\gamma}.
\end{equation}

\subsection{Assumptions} We list here the main assumptions that we will use throughout the paper on the non-linear term $F$,  on the parameters  $\alpha,  \beta$ and on the distributional term $b$.
\begin{description}
\item[A1] \textbf{Assumption on non-linear term $F$.}
\emph{Let  $F:\mathbb R^d\to \mathbb R$  be a $\C^1$-function whose partial derivatives $\frac{\partial}{\partial x_i} F$ are Lipschitz with the same constant $L$ for all $i=1, \ldots, d$.}
\end{description} 
Note that from Assumption A1 it follows that there exists a  positive constant $l$ such that 
\[
\left | \frac{\partial  F}{\partial x_i}(x)\right | \leq l (1+|x|_d)
\]
for all $i=1,\ldots, d$. 
The key example we have in mind is the {\em quadratic non-linearity} $F(x)=x^2$ (in dimension $d=1$).

Using $F$ we define an operator $\mathrm F$ as follows: for any element $f\in \mathcal C^\alpha$ for some $\alpha>0$ we define the function $\mathrm F(f)$ on $\mathbb R^d$ by
\begin{equation}\label{eq: operator F}
\mathrm F(f)(\cdot):=F(f(\cdot)).
\end{equation}
\begin{description}
\item[A2] \textbf{Assumption on parameters.}
\emph{We choose $0<\alpha<1$ and $\beta<0$ such that  $\max\{-\alpha, \alpha-1\}<\beta$. In particular this implies $-\frac12<\beta<0$.} 
\item[A3] \textbf{Assumption on $b$}.
\emph{We take $b\in L_T^\infty \mathcal C^\beta$.}
\end{description}

\section{Solving the PDE}\label{sc: solving the PDE}

\subsection{On the non-linear term}\label{ssc: non-linear term}
 In this section we prove a technical result that will be  key to control the non-linear term in equation \eqref{eq: PDE non lin Cauchy prb} when applying a fixed point argument later on.  We state and prove the result for the operator $\mathrm F$ applied to functions $f$ and $g$ with the same regularity as $\nabla u(s)$ will have.

\begin{prop}\label{pr: mapping prop non linear F}
Let $F: \mathbb R^d \to \mathbb R$ be a non-linear function that satisfies Assumption A1.  Then the operator $\mathrm F$ defined in \eqref{eq: operator F} is a  map
 \[
 \mathrm F: \mathcal C^\alpha \to \mathcal C^\alpha
 \]
  for any $\alpha\in(0,1)$. In particular if $\mathbf 0$ denotes the zero-function then  $\|\mathrm F(\mathbf 0)\|_\alpha = |F(0)|$. Moreover for  $f, g: \mathbb R^d \to \mathbb R^d$ elements of  $\mathcal C^\alpha$ component by component then we have
\begin{align}\label{eq: mapping prop non linear F}
\|\mathrm F(f)-\mathrm F( g)\|_{\alpha} & \leq c( 1+ \| f \|_\alpha^2 +\| g \|_\alpha^2 )^{1/2} \|f-g\|_{\alpha}
\end{align}
where the constant $c$  depends on $ L, l$ and $d$.
\end{prop}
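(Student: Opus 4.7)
The plan is to work with the equivalent Hölder norm \eqref{eq: equivalent norm C alpha} on $\mathcal C^\alpha$ (available because $\alpha\in(0,1)$), splitting the desired estimate into an $L^\infty$-part and a Hölder-seminorm part. The first claim is immediate: $\mathrm F(\mathbf 0)$ is the constant function $F(0)$, whose Hölder seminorm vanishes, so $\|\mathrm F(\mathbf 0)\|_\alpha=|F(0)|$; the mapping property $\mathrm F:\mathcal C^\alpha\to\mathcal C^\alpha$ then follows by taking $g=\mathbf 0$ in \eqref{eq: mapping prop non linear F}. So the core task is the Lipschitz-type estimate.

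The starting point is the fundamental-theorem-of-calculus representation
\begin{equation*}
F(f(x))-F(g(x))=\int_0^1 \nabla F(V_t(x))\cdot(f(x)-g(x))\,\ud t,\qquad V_t:=tf+(1-t)g,
\end{equation*}
which is well-defined by Assumption A1. The linear-growth bound $|\nabla F(y)|\le c(1+|y|_d)$ noted after Assumption A1 immediately gives the $L^\infty$ contribution
\begin{equation*}
|F(f(x))-F(g(x))|\le c(1+\|f\|_\alpha+\|g\|_\alpha)\|f-g\|_\alpha.
\end{equation*}

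For the Hölder seminorm I would take the difference of the above representation at $x+h$ and at $x$ (for $0<|h|_d\le 1$) and then, under the integral, add and subtract $\nabla F(V_t(x))\cdot(f(x+h)-g(x+h))$ to split the integrand into
\begin{equation*}
\bigl[\nabla F(V_t(x+h))-\nabla F(V_t(x))\bigr]\cdot(f-g)(x+h)+\nabla F(V_t(x))\cdot\bigl[(f-g)(x+h)-(f-g)(x)\bigr].
\end{equation*}
The first term is controlled using the Lipschitz property of $\nabla F$ (Assumption A1) together with $|V_t(x+h)-V_t(x)|_d\le c(\|f\|_\alpha+\|g\|_\alpha)|h|_d^\alpha$ and $|(f-g)(x+h)|_d\le c\|f-g\|_\alpha$. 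The second term is controlled using the linear growth $|\nabla F(V_t(x))|\le c(1+\|f\|_\alpha+\|g\|_\alpha)$ and the Hölder bound $|(f-g)(x+h)-(f-g)(x)|_d\le c\|f-g\|_\alpha|h|_d^\alpha$. Summing and dividing by $|h|_d^\alpha$ then yields $\|\mathrm F(f)-\mathrm F(g)\|_\alpha\le c(1+\|f\|_\alpha+\|g\|_\alpha)\|f-g\|_\alpha$.

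Finally, to match the precise form claimed in \eqref{eq: mapping prop non linear F}, I would apply the elementary inequality $1+a+b\le\sqrt 3\,(1+a^2+b^2)^{1/2}$ for $a,b\ge 0$ (a direct consequence of Cauchy–Schwarz) with $a=\|f\|_\alpha$ and $b=\|g\|_\alpha$. The only mildly delicate aspect is bookkeeping of the dimension-dependent constants when translating componentwise Hölder bounds to bounds in the Euclidean norm on $\mathbb R^d$-valued $f$ and $g$; there is no conceptual obstacle beyond this.
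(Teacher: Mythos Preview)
Your proof is correct and follows the same overall strategy as the paper: work with the equivalent H\"older norm \eqref{eq: equivalent norm C alpha}, use the fundamental-theorem-of-calculus representation together with the Lipschitz property and linear growth of $\nabla F$, and handle the $L^\infty$ and H\"older-seminorm parts separately.

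There is one genuine difference in the decomposition for the H\"older seminorm. You apply the integral representation to $F(f(\cdot))-F(g(\cdot))$ at each spatial point (interpolating between $f(x)$ and $g(x)$), then subtract the resulting expressions at $x+h$ and $x$. The paper instead applies the representation to $F(f(x+y))-F(f(x))$ and to $F(g(x+y))-F(g(x))$ separately (interpolating between values of the \emph{same} function at nearby points), and then subtracts. Your route is arguably more uniform, since the same representation serves for both the $L^\infty$ and seminorm estimates, and it naturally produces the symmetric factor $1+\|f\|_\alpha+\|g\|_\alpha$ which you then convert via Cauchy--Schwarz. The paper's route yields the slightly asymmetric pair $(1+\|f\|_\alpha^2)^{1/2}$ and $\|g\|_\alpha$ directly, which are combined at the end. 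Both decompositions are standard and lead to the same bound with comparable effort.
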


\begin{proof}
 For simplicity of notation we will omit the brackets and sometimes write $\mathrm Ff-\mathrm Fg$ instead of $\mathrm F(f)-\mathrm F(g)$ for $f, g \in \mathcal C^\alpha$. We recall that a function is an element of  $\mathcal C^\alpha$ if its norm is bounded. Moreover for $0<\alpha<1$ we can use the equivalent norm \eqref{eq: equivalent norm C alpha}. 
 
 We want to bound
\begin{align}\label{eq: eq for Ff-Fg}
\| \mathrm Ff-\mathrm Fg \|_\alpha :=& \sup_{x\in \mathbb R^d} |Ff(x) - Fg(x)| \nonumber \\
+&  \sup_{0<|y|_d\leq 1} \sup_{x\in \mathbb R^d} \frac{|Ff(x+y) - Fg (x+y) - Ff(x) + Fg(x)|}{|y|_d^\alpha}.
\end{align}
Using the  $ \C^1$ assumption on $F$, we have for   $ a,b \in \R^d$ and   $\theta\in[0,1]$ that	 
\begin{align*}
\frac{\mathrm d}{\mathrm d \theta} F( \theta a +(1-\theta)b ) &= \sum_{i=1}^d \frac \partial {\partial x_i} F ( \theta a +(1-\theta)b ) (  a_i -  b_i ),
\end{align*}
 and so integrating from 0 to 1 in $\mathrm d \theta$ one has
\[
  F(a)- F(b) =  \int_0^1 \nabla F(\theta a+(1-\theta)b) \,\ud \theta \cdot (a-b). 
\]
Furthermore using the linear growth assumption on each component $\frac{\partial}{\partial x_i}F$ of $\nabla F$ and Jensen's inequality  we get
\begin{align}  \label{eq: F lipschitz bound}
\vert F(a)-F(b)\vert  & \leq \nonumber  
  \vert a-b \vert_d  \int_0^1 \left( \sum_{i=1}^d  \left\vert  \frac\partial{\partial x_i} F(\theta a+(1-\theta)b)\right\vert^2 \right)^{1/2} \mathrm d \theta\\
  & \leq   c \vert a-b \vert_d  \int_0^1  \left(  \sum_{i=1}^d  l^2 (1+ |\theta a+(1-\theta)b |_d)^2 \right)^{1/2} \mathrm d \theta \\ \nonumber 
  & \leq   c \vert a-b \vert_d  \int_0^1  \left(  \sum_{i=1}^d  l^2 (1+ \theta^2 |a|^2+(1-\theta)^2 |b |^2_d) \right)^{1/2} \mathrm d \theta \\ \nonumber 
& \leq    c\sqrt d l  \vert a-b \vert_d     (   1+ |a|^2_d+ |b|^2_d)^{1/2} .
\end{align} 
Hence for the first term in \eqref{eq: eq for Ff-Fg} we get
\begin{align*}
\sup_{x\in \mathbb R^d} |Ff(x) - Fg(x)| &\leq c \sup_{x\in \mathbb R^d} |f(x) - g(x)|  (1+ |f(x)|^2_d+ |g(x)|^2_d)^{1/2}\\
&\leq c \|f-g\|_\alpha (1+\|f\|_\alpha^2 + \|g\|_\alpha^2 )^{1/2}.
\end{align*}
Let us now focus on the numerator appearing in the second  term  of \eqref{eq: eq for Ff-Fg}. Inside  the absolute value  we use twice a computation similar to the one used above and add and subtract the same quantity  to  get
\begin{align*}
 &|Ff(x+y) -Ff(x)-Fg(x+y)+Fg(x)| \\ 
= &  \Big | \int_0^1 \nabla F ( \theta f(x+y) + (1-\theta)f(x)) \mathrm d \theta \cdot(f(x+y)-f(x))\\
& -  \int_0^1 \nabla F ( \theta g(x+y) + (1-\theta)g(x)) \mathrm d \theta \cdot (g(x+y)-g(x)) \Big|_d \\ 
\leq &   \int_0^1  \left | \nabla F ( \theta f(x+y) + (1-\theta)f(x)) \right|_d \mathrm d \theta  \\
& \left|f(x+y)-f(x) - g(x+y)+g(x) \right|_d\\
& +   \Big| \int_0^1\left [ \nabla F ( \theta f(x+y) + (1-\theta)f(x)) -  \nabla F ( \theta g(x+y) + (1-\theta)g(x))\right] \mathrm d \theta \\
 &    \cdot(g(x+y)-g(x) ) \Big|
\end{align*}
The first term can be bounded similarly as in \eqref{eq: F lipschitz bound} by
 $$c ( 1+  \|f\|^2_\alpha )^{1/2} |f(x+y)-f(x) - g(x+y)+g(x)|_d .$$
For the second term above, we first observe that since $\frac{\partial}{\partial x_i}  F:\mathbb R^d \to \mathbb R$ is Lipschitz by assumption for all $i$, then $\nabla F: \mathbb R^d \to \mathbb R^d$ is Lipschitz with constant   $L\sqrt d  $. Thus we get the upper bound
\begin{align}\label{eq: second summand non-linear F}
 \nonumber
&|g(x+y)-g(x) |_d \sqrt d L \\ \nonumber
&\int_0^1 \left| 
\theta f(x+y) + (1-\theta)f(x) -\theta g(x+y) - (1-\theta)g(x)\right|_d \mathrm d\theta \\
\leq& c |g(x+y)-g(x)|_d  \| f-g\|_\alpha.
\end{align}
Putting everything together for both terms in \eqref{eq: eq for Ff-Fg} we get the bound
\begin{align*}
 &\| \mathrm Ff-\mathrm Fg \|_\alpha \\ 
 \leq & c  \sup_{0<|y|_d\leq 1} \sup_{x\in\mathbb R^d} \Big [ ( 1+   \|f\|_\alpha^2 )^{1/2} \frac{ |f(x+y)-f(x) - g(x+y)+g(x)|_d}{|y|^\alpha_d}\\
&+ \|f-g\|_\alpha\frac{ |g(x+h)-g(x)|_d}{|y|^\alpha_d } \Big] \\
\leq & c ( 1+   \|f\|_\alpha^2 )^{1/2} \|f-g\|_\alpha + \|f-g\|_\alpha \|g\|_\alpha\\
\leq & c \|f-g\|_\alpha  ( 1+   \|f\|_\alpha^2 + \|g\|^2_\alpha )^{1/2} 
\end{align*}
having used again the  equivalent norm \eqref{eq: equivalent norm C alpha}. This shows \eqref{eq: mapping prop non linear F} and in particular that $\mathrm Ff-\mathrm Fg \in \mathcal C^\alpha$. \\
Let us denote by $k:= F(0)$. Then clearly  $\mathrm F\mathbf 0\equiv k$ and
  \begin{align*}
 \|\mathrm F\mathbf 0 \|_\alpha
 &= \sup_{x\in \mathbb R^d} |(\mathrm F \mathbf 0) (x)| + \sup_{0<|y|_d\leq 1} \sup_{x\in \mathbb R^d} \frac{|(\mathrm F\mathbf 0)(x+y) - (\mathrm F\mathbf 0)(x)|}{|y|_d^\alpha}\\
& = \sup_{x\in \mathbb R^d} |k| + 0\\ 
&= |k|.
 \end{align*}
Finally to show that $\mathrm F$ maps $\mathcal C^\alpha$ into itself it is enough to observe that 
\[
\|\mathrm Ff\|_\alpha \leq  \|\mathrm F f - \mathrm F \mathbf 0\|_\alpha + |k|
\] 
 and then the RHS of the above equation is finite by \eqref{eq: mapping prop non linear F} hence $\mathrm Ff\in \mathcal C^\alpha$ for all $f\in \mathcal C^\alpha$.
\end{proof}

\subsection{Existence and Uniqueness}

Let us denote by $J_t(u)$ the right-hand side of (\ref{eq: mild solution}), more precisely 
\begin{equation}\label{eq: operator J}
J_t(u):=P_t u_0+ I_t(u),
\end{equation}
where the integral operator $I$ is given by 
\begin{equation}\label{eq: operator I}
  I_t(u):= \int_{0}^t P_{t-s} \left( \mathrm F(\nabla u(s)) b(s)\right) \mathrm ds
\end{equation}
and the semigroup $P_{t-s}$ acts on the whole product $\mathrm F(\nabla u(s)) b(s)$.

Using Schauder's estimates it is easy to show that $t\mapsto I_t(u)$ is continuous from $[0,T]$ to $\mathcal C^{\alpha+1}$. We show the result below for a general $f$ in place of $ F(\nabla u(s)) b(s)$. Note that the result might look not sharp  because one normally gains 2 derivatives in parabolic PDEs when using semigroup theory (and possibly some time regularity too). Here we  gain slightly less than 2 derivatives (we go from $\beta$ to $\alpha+1$ and $\alpha+1-\beta<2$) because we need the time singularities $t^{-\theta}$ and $t^{-\frac{\alpha+1-\beta}{2}}$ to be integrable. We will   investigate  the time regularity, that is, H\"older continuity in time of small order, later in Section \ref{sc:global}.

\begin{lemma}\label{lm: continuity of I}
Let $\alpha, \beta$ satisfy Assumption A2. Let $f\in L_T^\infty \mathcal C^{\beta}$. Then $\mathcal I_\cdot (f)\in C_T\mathcal C^{\alpha+1}$, where $\mathcal I_t(f):= \int_0^t P_{t-s}f(s)\mathrm ds$.
\end{lemma}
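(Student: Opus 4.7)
The plan is to verify two things: (i) for each fixed $t\in[0,T]$, $\mathcal I_t(f)\in\mathcal C^{\alpha+1}$ with a uniform-in-$t$ bound, and (ii) the map $t\mapsto\mathcal I_t(f)$ is continuous with values in $\mathcal C^{\alpha+1}$.

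For (i), I would apply the Schauder estimate \eqref{eq: mapping Pt Besov spaces} with $\gamma=\beta$ and $2\theta=\alpha+1-\beta$, giving
\[
\|P_{t-s}f(s)\|_{\alpha+1}\le c\,(t-s)^{-(\alpha+1-\beta)/2}\|f(s)\|_\beta.
\]
Integrating in $s$ requires the exponent $(\alpha+1-\beta)/2$ to be strictly less than $1$, i.e.\ $\alpha+1-\beta<2$, which is exactly the condition $\alpha-1<\beta$ from Assumption A2. Therefore
\[
\|\mathcal I_t(f)\|_{\alpha+1}\le c\,\|f\|_{L^\infty_T\mathcal C^\beta}\int_0^t(t-s)^{-(\alpha+1-\beta)/2}\mathrm ds<\infty,
\]
uniformly for $t\in[0,T]$.

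For (ii), fix $0\le t_1<t_2\le T$ and split
\[
\mathcal I_{t_2}(f)-\mathcal I_{t_1}(f)=\int_{t_1}^{t_2}P_{t_2-s}f(s)\,\mathrm ds+\int_0^{t_1}\bigl(P_{t_2-s}-P_{t_1-s}\bigr)f(s)\,\mathrm ds.
\]
The first piece is controlled exactly as in (i): its $\mathcal C^{\alpha+1}$-norm is bounded by $c\|f\|_{L^\infty_T\mathcal C^\beta}\int_{t_1}^{t_2}(t_2-s)^{-(\alpha+1-\beta)/2}\mathrm ds$, which vanishes as $t_2-t_1\to 0$ since the exponent is less than $1$. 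For the second piece I would use the semigroup property to write $P_{t_2-s}-P_{t_1-s}=(P_{t_2-t_1}-1)P_{t_1-s}$ and combine \eqref{eq: mapping Pt-I Besov spaces} with \eqref{eq: mapping Pt Besov spaces}: pick a small $\eta>0$ and estimate
\[
\|(P_{t_2-t_1}-1)P_{t_1-s}f(s)\|_{\alpha+1}\le c\,|t_2-t_1|^{\eta}\|P_{t_1-s}f(s)\|_{\alpha+1+2\eta}\le c\,|t_2-t_1|^{\eta}(t_1-s)^{-(\alpha+1+2\eta-\beta)/2}\|f(s)\|_\beta.
\]

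The delicate step, which is essentially the only real constraint, is choosing $\eta$: we need $(\alpha+1+2\eta-\beta)/2<1$, i.e.\ $\eta<(1+\beta-\alpha)/2$. Because Assumption A2 gives $1+\beta-\alpha>0$, such an $\eta>0$ exists, and then the $s$-integral on $[0,t_1]$ is finite and uniformly bounded in $t_1$. The factor $|t_2-t_1|^{\eta}$ then ensures the second piece also tends to $0$. Continuity from the right at $t_1$ and from the left at $t_2$ follow by the same estimates, concluding that $\mathcal I_\cdot(f)\in C_T\mathcal C^{\alpha+1}$. The main obstacle, and really the whole point of the parameter restriction, is to simultaneously accommodate the singularity $(t-s)^{-(\alpha+1-\beta)/2}$ from the gain of regularity and a positive H\"older exponent in time; both require $\alpha+1-\beta$ to be strictly less than $2$, which is precisely what Assumption A2 provides.
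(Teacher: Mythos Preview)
Your proof is correct and follows essentially the same route as the paper: the same Schauder estimate gives boundedness, and for continuity you use the identical decomposition into a tail integral and a difference term handled via $(P_{t_2-t_1}-1)P_{t_1-s}$ together with the two estimates \eqref{eq: mapping Pt Besov spaces} and \eqref{eq: mapping Pt-I Besov spaces}. Your parameter $\eta$ is exactly the paper's $\nu$, with the same constraint $\eta<(1+\beta-\alpha)/2$ coming from Assumption A2; the only cosmetic difference is that the paper applies the two Schauder estimates in the opposite order (first $(P_\varepsilon-1)$ to drop from $\mathcal C^\beta$ to $\mathcal C^{\beta-2\nu}$, then $P_{t-s}$ to lift to $\mathcal C^{\alpha+1}$), which yields the same bound.
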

\begin{proof}
We first observe that for fixed $0\leq s\leq t\leq T$ then $P_{t-s}f(s)\in \mathcal C^{\alpha+1}$ by  \eqref{eq: mapping Pt Besov spaces}. The singularity in time is still integrable if $\alpha $ and $\beta$ satisfy Assumption A2. To show continuity of $\mathcal I$ we take some $\varepsilon>0$ and we bound  $\mathcal I_{t+\varepsilon}(f) - \mathcal I_t(f)$ in the space $\mathcal C^{\alpha+1}$ by
\begin{align*}
&\|\int_0^t P_{t-s}(P_\varepsilon f(s) ) \mathrm ds  + \int_t^{t+\varepsilon} P_{t+\varepsilon-s} f(s)  \mathrm ds  - \int_0^t P_{t-s}f(s) \mathrm ds   \|_{\alpha+1}\\
\leq & \| \int_0^t P_{t-s}(P_\varepsilon f(s) -f(s)) \mathrm ds  \|_{\alpha+1} + \| \int_t^{t+\varepsilon} P_{t+\varepsilon-s} f(s)  \mathrm ds \|_{\alpha+1}.
\end{align*}
Now we use Schauder's estimates \eqref{eq: mapping Pt Besov spaces} and \eqref{eq: mapping Pt-I Besov spaces} with some $\nu>0$ such that $\theta:=\alpha+1-\beta+2\nu<2$ (which always exists by Assumption A2) and we get
\begin{align*}
\|\mathcal I_{t+\varepsilon}(f)&-\mathcal I_{t}(f)\|_{\alpha +1} \\\leq  & c \int_0^t (t-s)^{-\frac \theta 2} \| P_\varepsilon f(s) -f(s)\|_{\beta-2\nu}  \mathrm ds  \\
& + c \int_t^{t+\varepsilon} (t+\varepsilon-s)^{-\frac{\alpha+1-\beta}{2}} \|f(s)\|_{\beta} \mathrm ds\\
 \leq&c \int_0^t (t-s)^{-\frac\theta 2} |\varepsilon|^\nu \| f(s)\|_{\beta}  \mathrm ds  \\
 & + c \int_t^{t+\varepsilon} (t+\varepsilon-s)^{-\frac{\alpha+1-\beta}{2}} \|f(s)\|_{\beta} \mathrm ds\\
\leq &c\|f\|_{L_T^\infty\mathcal C^\beta} \left( |\varepsilon|^\nu \int_0^t (t-s)^{-\frac\theta 2}  \mathrm ds + \int_t^{t+\varepsilon} (t+\varepsilon-s)^{-\frac{\alpha+1-\beta}{2}}  \mathrm ds \right)\\
\leq &c \|f\|_{L_T^\infty\mathcal C^\beta} \left( |\varepsilon|^\nu   t^{-\frac\theta 2+1} +    \varepsilon^{\frac{-\alpha+1+\beta}{2}}   \right),
\end{align*}
and the latter tends to 0 as $\varepsilon\to0$ for all $t\in[0,T]$ because $\nu>0$ and $-\frac \theta 2+1>0$ by construction and $-\alpha+1+\beta>0$ by Assumption A2.
\end{proof}

Next we show an auxiliary   result useful later on.
\begin{prop}\label{pr: bound for Iu - Iv}
Let Assumptions A1, A2 and A3 hold. Let $u, v \in C_T\mathcal C^{\alpha+1}$. Then for all $\rho\geq1$
\begin{align}\label{eq: bound for Iu - Iv}
\nonumber
\|I(u) - I(v)\|_{C_T \mathcal C^{\alpha+1} }^{(\rho)} \leq & c \|b\|_{L_T^\infty \mathcal C^\beta} \rho^{\frac{\alpha-1-\beta}2 } (1+ \|u\|^2_{C_T \mathcal C^{\alpha+1} } + \|v\|^2_{C_T \mathcal C^{\alpha+1} })^{1/2}\\
& \|u-v\|_{C_T \mathcal C^{\alpha+1} }^{(\rho)}
\end{align} 
where the constant $c$ depends only on  $ L, l$ and $d$. 
\end{prop}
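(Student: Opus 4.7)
The plan is to take the difference $I_t(u)-I_t(v)$, write it as a single integral, and estimate the $\mathcal C^{\alpha+1}$-norm of the integrand using the tools from Sections 2 and 3.1 in sequence: Schauder to gain regularity from the semigroup, Bony to handle the distributional product, and Proposition \ref{pr: mapping prop non linear F} to control $\mathrm F(\nabla u)-\mathrm F(\nabla v)$. The $\rho$-norm then emerges from the time integral, via the standard Laplace-type computation with a weight $e^{-\rho(t-s)}$.

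More concretely, first I would write
\[
I_t(u)-I_t(v)=\int_0^t P_{t-s}\bigl((\mathrm F(\nabla u(s))-\mathrm F(\nabla v(s)))\,b(s)\bigr)\,\mathrm ds.
\]
Then apply Schauder's estimate \eqref{eq: mapping Pt Besov spaces} with $\gamma=\beta$ and $2\theta=\alpha+1-\beta$, which is admissible because Assumption A2 gives $\alpha-1<\beta<0$, hence $0<\theta<1$. This yields
\[
\|P_{t-s}((\mathrm F(\nabla u(s))-\mathrm F(\nabla v(s)))b(s))\|_{\alpha+1}\le c(t-s)^{-(\alpha+1-\beta)/2}\|(\mathrm F(\nabla u(s))-\mathrm F(\nabla v(s)))b(s)\|_{\beta}.
\]
Next, since $\mathrm F(\nabla u(s))-\mathrm F(\nabla v(s))\in\mathcal C^\alpha$ by Proposition \ref{pr: mapping prop non linear F} and $b(s)\in\mathcal C^\beta$ with $\alpha+\beta>0$, Bony's bound \eqref{eq: Bony's estimates} gives
\[
\|(\mathrm F(\nabla u(s))-\mathrm F(\nabla v(s)))b(s)\|_\beta\le c\|\mathrm F(\nabla u(s))-\mathrm F(\nabla v(s))\|_\alpha\|b(s)\|_\beta.
\]
Applying Proposition \ref{pr: mapping prop non linear F} componentwise to $\nabla u(s),\nabla v(s)\in\mathcal C^\alpha$ and using $\|\nabla w(s)\|_\alpha\le\|w(s)\|_{\alpha+1}$ together with $\|b(s)\|_\beta\le\|b\|_{L_T^\infty\mathcal C^\beta}$ collapses the pointwise-in-$s$ factor into
\[
c\,\|b\|_{L_T^\infty\mathcal C^\beta}\bigl(1+\|u\|^2_{C_T\mathcal C^{\alpha+1}}+\|v\|^2_{C_T\mathcal C^{\alpha+1}}\bigr)^{1/2}\|u(s)-v(s)\|_{\alpha+1}.
\]

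The final step, which produces the $\rho$-dependence, is the key manoeuvre: multiply both sides by $e^{-\rho t}$, insert $e^{-\rho s}e^{\rho s}$ inside the integral, and bound $e^{-\rho s}\|u(s)-v(s)\|_{\alpha+1}$ by $\|u-v\|_{C_T\mathcal C^{\alpha+1}}^{(\rho)}$. What remains is
\[
\int_0^t (t-s)^{-(\alpha+1-\beta)/2}e^{-\rho(t-s)}\,\mathrm ds\le \int_0^\infty r^{-(\alpha+1-\beta)/2}e^{-\rho r}\,\mathrm dr = \Gamma\!\Bigl(\tfrac{1-\alpha+\beta}{2}\Bigr)\rho^{(\alpha-1-\beta)/2},
\]
where convergence of the Gamma integral uses $\alpha-1<\beta$ (integrability at $r=0$) from Assumption A2, and this same assumption makes the exponent $(\alpha-1-\beta)/2$ negative, which is what will give room for a contraction later. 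There is no essential obstacle; the only point requiring care is that both applications of Assumption A2 (Schauder admissibility and Gamma integrability) amount to the same condition $\alpha-1<\beta$, so the bound is sharp in the sense that the same hypothesis is used throughout. Taking the supremum over $t\in[0,T]$ then yields \eqref{eq: bound for Iu - Iv}.
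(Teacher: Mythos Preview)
Your proof is correct and follows essentially the same route as the paper's own argument: write the difference as a single integral, apply Schauder's estimate \eqref{eq: mapping Pt Besov spaces}, then Bony's product bound \eqref{eq: Bony's estimates}, then Proposition \ref{pr: mapping prop non linear F}, and finally extract the $\rho$-weight via the Laplace/Gamma integral. The only cosmetic differences are that the paper cites the Bernstein inequality for $\|\nabla w\|_\alpha\le c\|w\|_{\alpha+1}$ (you state it without the constant) and leaves the time integral as $\rho^{(\alpha-1-\beta)/2}$ without naming the Gamma function explicitly.
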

\begin{proof}
Using the definition of $I$ we have

\begin{align*}
\|I(u) - &I(v)\|_{C_T \mathcal C^{\alpha+1} }^{(\rho)} \\
& = \sup_{0\leq t\leq T} e^{-\rho t} \|I_t(u) - I_t(v)\|_{\alpha+1} \\
& = \sup_{0\leq t\leq T} e^{-\rho t} \left\| \int_0^t P_{t-s}\left(  [\mathrm F(\nabla u(s) ) - \mathrm F(\nabla v(s))] b(s)\right)\mathrm d s \right\|_{\alpha+1}. 
\end{align*}
Now using \eqref{eq: mapping Pt Besov spaces} with $\theta = \frac{\alpha+1-\beta}{2} $ (which is positive by Assumption A2)  and \eqref{eq: Bony's estimates} (again by   A2 $\alpha+\beta>0$) we bound the integrand by 
\[
(t-s)^{-\frac{\alpha+1-\beta}{2}}   \|b\|_{L_T^\infty \mathcal C^\beta} \|\mathrm F(\nabla u(s) )- \mathrm F(\nabla v (s)) \|_\alpha
\] 
and using the result of Proposition \ref{pr: mapping prop non linear F} we further bound it by 
\begin{align*}
 c (t-s)^{-\frac{\alpha+1-\beta}{2}}   \|b\|_{L_T^\infty \mathcal C^\beta} \| \nabla u(s)-   \nabla v (s)  \|_\alpha (1+\| \nabla u (s)\|_\alpha^2 + \|\nabla v(s)\|_\alpha^2)^{1/2},
\end{align*}
where the constant $c$ depends on $ L, l$ and $d$. 
Substituting the last bound into the equation above we get
\begin{align*}
\|I(u) - &I(v)\|_{C_T \mathcal C^{\alpha+1} }^{(\rho)} \\
 \leq & c  \|b\|_{L_T^\infty \mathcal C^\beta}  \sup_{0\leq t\leq T} \int_0^t  (t-s)^{-\frac{\alpha+1-\beta}{2}}     e^{-\rho (t-s)}   \\
& e^{-\rho s}   \| \nabla u(s)-   \nabla v (s)  \|_\alpha  (1+\| \nabla u (s)\|_\alpha^2 + \|\nabla v(s)\|_\alpha^2)^{1/2} \mathrm d s  \\
\leq &  c  \|b\|_{L_T^\infty \mathcal C^\beta}  \sup_{0\leq t\leq T} \int_0^t  (t-s)^{-\frac{\alpha+1-\beta}{2}}     e^{-\rho (t-s)} \mathrm d s  \\
&     \| \nabla u-   \nabla v   \|^{(\rho)}_{C_T \mathcal C^\alpha}  (1+\| \nabla u \|_{C_T\mathcal C^\alpha}^2 + \|\nabla v\|_{C_T\mathcal C^\alpha}^2 )^{1/2} .
\end{align*}
Finally we use the bound $\|\nabla f\|_\alpha \leq c\|f\|_{\alpha+1}$ for $f\in \mathcal C^{\alpha+1}$ (which follows from Bernstein inequalities, see e.g. \cite[Lemma 2.1]{bahouri}) and we integrate the singularity since $-\frac{\alpha+1-\beta}{2}>-1 $ to get
\[
c   \|b\|_{L_T^\infty \mathcal C^\beta}  \rho^{\frac{\alpha-1-\beta}{2}} (1+\| u \|_{C_T\mathcal C^{\alpha+1}}^2 + \| v\|_{C_T\mathcal C^{\alpha+1}}^2 )^{1/2}   \|u-v\|_{C_T \mathcal C^{\alpha+1} }^{(\rho)},
\]
as wanted.
\end{proof}
We remark that the power of $\rho$ in \eqref{eq: bound for Iu - Iv} is negative due to Assumption A2 and the idea is to pick $\rho$ large enough so that $I$ is a contraction. However this cannot be done using \eqref{eq: bound for Iu - Iv} directly because of the term 
$ (1+\| u \|_{C_T\mathcal C^{\alpha+1}}^2 + \| v\|_{C_T\mathcal C^{\alpha+1}}^2 )^{1/2}  $. 
Indeed  we are only able to show existence and uniqueness of a solution for a small time-interval  or alternatively for a  small initial condition, as we will see later.
\begin{prop} \label{pr: mapping of J in C}
Let Assumptions A1, A2 and A3 hold. Let $u_0\in \mathcal C^{\alpha+1}$ be given. Then the operator $J$ maps $ C_T\mathcal C^{\alpha+1}$ into itself. In particular, for arbitrary $T, \rho$ and $u \in C_T\mathcal C^{\alpha+1}$ we have
\begin{align}\label{eq: bound for Ju}
\|J(u)\|_{C_T\mathcal C^{\alpha+1}}^{(\rho)} 
 &\leq \|u_0\|_{\alpha+1} \\
& + C \rho^{\frac{\alpha-1-\beta}2} \left(1 + \|u\|^{(\rho)}_{ C_T \mathcal C^{\alpha + 1}} (1 + \|u\|^2_{ C_T \mathcal C^{\alpha + 1}})^{1/2} \right), \nonumber
\end{align}
where $C= c \|b\|_{L_T^\infty \mathcal C^\beta}$ is the constant appearing in \eqref{eq: bound for Iu - Iv} in front of $\rho$ and $c$ depends only on $ L, l$ and $d$. 
\end{prop}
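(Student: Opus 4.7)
The plan is to decompose $J_t(u) = P_t u_0 + I_t(u)$ and estimate the two summands separately. For the continuity statement $J(u) \in C_T\mathcal C^{\alpha+1}$, I will use the strong continuity of the heat semigroup on $\mathcal C^{\alpha+1}$ for the first summand and apply Lemma \ref{lm: continuity of I} to the second with $f(s) := \mathrm F(\nabla u(s))\,b(s)$. To legitimise the latter I must verify $f \in L_T^\infty\mathcal C^\beta$: since $u \in C_T\mathcal C^{\alpha+1}$ gives $\nabla u \in C_T\mathcal C^\alpha$ componentwise (Bernstein inequalities), Proposition \ref{pr: mapping prop non linear F} yields a uniform bound on $\|\mathrm F(\nabla u(s))\|_\alpha$, and Bony's estimate \eqref{eq: Bony's estimates}, applicable since $\alpha + \beta > 0$ by A2, then gives $\|f(s)\|_\beta \leq c\,\|\mathrm F(\nabla u(s))\|_\alpha\,\|b(s)\|_\beta$, uniformly in $s\in[0,T]$.

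For the quantitative bound, the triangle inequality gives
\[
\|J(u)\|^{(\rho)}_{C_T\mathcal C^{\alpha+1}}
\leq \|P_\cdot u_0\|^{(\rho)}_{C_T\mathcal C^{\alpha+1}}
+ \|I(u) - I(\mathbf 0)\|^{(\rho)}_{C_T\mathcal C^{\alpha+1}}
+ \|I(\mathbf 0)\|^{(\rho)}_{C_T\mathcal C^{\alpha+1}}.
\]
The first summand is controlled by $\|u_0\|_{\alpha+1}$ using the contractivity of $P_t$ on $\mathcal C^{\alpha+1}$ (Schauder with $\theta=0$) combined with $e^{-\rho t}\leq 1$. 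The second summand is immediately bounded by Proposition \ref{pr: bound for Iu - Iv} applied with $v = \mathbf 0$, yielding exactly $C\rho^{\frac{\alpha-1-\beta}{2}}(1+\|u\|_{C_T\mathcal C^{\alpha+1}}^2)^{1/2}\|u\|^{(\rho)}_{C_T\mathcal C^{\alpha+1}}$.

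The third summand requires a short separate computation. Since $\mathrm F(\mathbf 0) \equiv F(0)$ is a scalar constant, $I_t(\mathbf 0) = F(0)\int_0^t P_{t-s}b(s)\,\ud s$. Schauder's estimate \eqref{eq: mapping Pt Besov spaces} with $\theta = \frac{\alpha+1-\beta}{2}$ (which lies in $(0,1)$ by A2, so the singularity is integrable), combined with the insertion of the factor $e^{-\rho(t-s)}e^{-\rho s}=e^{-\rho t}$ as in the proof of Proposition \ref{pr: bound for Iu - Iv}, gives
\[
\|I(\mathbf 0)\|^{(\rho)}_{C_T\mathcal C^{\alpha+1}} \leq c\,|F(0)|\,\|b\|_{L_T^\infty\mathcal C^\beta} \int_0^\infty r^{-\frac{\alpha+1-\beta}{2}}e^{-\rho r}\,\ud r = c'\,|F(0)|\,\|b\|_{L_T^\infty\mathcal C^\beta}\,\rho^{\frac{\alpha-1-\beta}{2}},
\]
by the standard Gamma-function identity $\int_0^\infty r^{-a}e^{-\rho r}\,\ud r = \rho^{a-1}\Gamma(1-a)$ valid for $a<1$. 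Absorbing $|F(0)|$ into the constant $C$ yields precisely the ``$1$'' inside the parentheses on the right-hand side of \eqref{eq: bound for Ju}; summing the three pieces gives the claimed inequality, and the mapping property $J:C_T\mathcal C^{\alpha+1}\to C_T\mathcal C^{\alpha+1}$ then follows from the continuity argument combined with the finiteness of the right-hand side.

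I do not foresee any real obstacle: the proposition is essentially a repackaging of Proposition \ref{pr: bound for Iu - Iv}, with the inhomogeneous data $P_\cdot u_0$ and the constant contribution from $F(0)\ne 0$ handled by hand. The only minor care point is ensuring that the constant in the semigroup bound $\|P_t g\|_{\alpha+1}\leq \|g\|_{\alpha+1}$ is exactly $1$ (otherwise a harmless constant must be tolerated in front of $\|u_0\|_{\alpha+1}$), but this is standard in the $B^\gamma_{\infty,\infty}$ setting.
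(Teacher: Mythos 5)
Your proof is correct and follows essentially the same route as the paper: decompose $J$ into the semigroup part $P_\cdot u_0$ and the integral part, bound the former by $\|u_0\|_{\alpha+1}$ via contractivity of $P_t$ and $e^{-\rho t}\le 1$, and bound the latter using Schauder, Bony's estimate and Proposition \ref{pr: mapping prop non linear F}. The only (minor, harmless) difference is where you add and subtract the zero function: you split at the level of the integral operator, writing $I(u) = (I(u)-I(\mathbf 0)) + I(\mathbf 0)$ and invoking Proposition \ref{pr: bound for Iu - Iv} with $v=\mathbf 0$, whereas the paper splits inside the $\mathcal C^\alpha$-norm, bounding $\|\mathrm F(\nabla u(s))\|_\alpha \le \|\mathrm F(\nabla u(s))-\mathrm F(\mathbf 0)\|_\alpha + |F(0)|$ and then integrating once; both variants produce the same bound with the constant $C$ absorbing $|F(0)|$.
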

\begin{proof}
It is clear that \eqref{eq: bound for Ju} implies that $J$ maps $ C_T \mathcal C^{\alpha + 1} $ into itself. To prove \eqref{eq: bound for Ju} we use the definition of $J$ to get
\begin{align*}
\|J(u)\|_{ C_T \mathcal C^{\alpha + 1}}^{(\rho)} &= \|P_\cdot u_0 + I(u)\|_{ C_T \mathcal C^{\alpha + 1}}\\
&\leq \| P_\cdot u_0\|_{ C_T \mathcal C^{\alpha + 1}}^{(\rho)} + \|I(u)\|_{ C_T \mathcal C^{\alpha + 1}}^{(\rho)}\\
& =:(A) + (B).
\end{align*}
The term (A) is bounded using the contraction property of $P_t$ in $\mathcal C^\alpha$  and by the definition of the equivalent norm
\[
(A)\leq \|u_0\|_{ C_T \mathcal C^{\alpha + 1}}^{(\rho)} = \sup_{0\leq t\leq T} e^{-\rho t} \|u_0\|_{\alpha+1} =  \|u_0\|_{\alpha+1}.
\]
The term (B) can be bounded similarly as in the proof of Proposition \ref{pr: bound for Iu - Iv} and one gets
\begin{align*}
(B)& \leq c \sup_{0\leq t \leq T} e^{-\rho t } \int_0^t (t-s)^{-\frac{\alpha+1-\beta}2}   \|\mathrm F(\nabla u(s))\|_\alpha \|b(s)\|_\beta \mathrm ds.
\end{align*}
Now we apply Proposition \ref{pr: mapping prop non linear F} with $f=\nabla u(s)$ and $g=0$ to get  
\begin{align*}
\|\mathrm F(\nabla u(s))- \mathrm F(\mathbf 0) + \mathrm F(\mathbf 0)\|_{\alpha}
& \leq \|\mathrm F(\nabla u(s))- \mathrm F(\mathbf 0)\|_\alpha + \|\mathrm F(\mathbf 0)\|_{\alpha}\\
& \leq c + (1+\|\nabla u(s)\|_\alpha^2)^{1/2} \|\nabla u(s)\|_\alpha\\
&\leq c (1+ \|u(s)\|_{\alpha+1}(1+\|u(s)\|_{\alpha+1}^2)^{1/2}).
\end{align*}
Plugging this into (B) we get
\begin{align*}
(B)\leq&   c   \|b\|_{L_T^\infty \mathcal C^\beta } \sup_{0\leq t \leq T}  \int_0^t e^{-\rho (t-s) } (t-s)^{-\frac{\alpha+1-\beta}2}  \mathrm ds\\
&\sup_{0\leq s\leq T } e^{-\rho s } \left(1+ \|u(s)\|_{\alpha+1}(1+\|u(s)\|_{\alpha+1}^2)^{1/2}\right)\\
\leq &  c   \|b\|_{L_T^\infty \mathcal C^\beta } \rho^{\frac{\alpha-1-\beta}2}   \left(1+ \|u\|_{C_T\mathcal C^{\alpha+1}}^{(\rho)} (1+\|u\|_{C_T\mathcal C^{\alpha+1}}^2)^{1/2}\right)
\end{align*}
as wanted.
\end{proof}
Carrying out the same proof in the special case when $F(0)=0$ we easily obtain the result below.
\begin{coroll}\label{cor: mapping of J in C}
Under the assumptions  of Proposition \ref{pr: mapping of J in C} and if moreover  $F(0)=0$ then we have
\begin{equation}\label{eq: bound for Ju for F(0)=0}
\|J(u)\|_{C_T\mathcal C^{\alpha+1}}^{(\rho)} 
\leq \|u_0\|_{\alpha+1} + C \rho^{\frac{\alpha-1-\beta}2} \|u\|^{(\rho)}_{ C_T \mathcal C^{\alpha + 1}} (1 + \|u\|^2_{ C_T \mathcal C^{\alpha + 1}})^{1/2}.
\end{equation}
 
\end{coroll}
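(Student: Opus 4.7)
The plan is to mimic the proof of Proposition \ref{pr: mapping of J in C} verbatim, but to exploit the hypothesis $F(0)=0$ at the single place where the constant term appeared. Recall that the proof splits $\|J(u)\|^{(\rho)}_{C_T\mathcal C^{\alpha+1}} \leq (A)+(B)$ with $(A)=\|P_\cdot u_0\|^{(\rho)}_{C_T\mathcal C^{\alpha+1}}$ and $(B)=\|I(u)\|^{(\rho)}_{C_T\mathcal C^{\alpha+1}}$. The bound $(A)\leq \|u_0\|_{\alpha+1}$ is unchanged, since it makes no use of $F$.

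For $(B)$, I would repeat the same reasoning: apply Schauder's estimate \eqref{eq: mapping Pt Besov spaces} with $\theta=\tfrac{\alpha+1-\beta}{2}$ together with Bony's estimate \eqref{eq: Bony's estimates} to reduce to controlling $\|\mathrm F(\nabla u(s))\|_\alpha \|b(s)\|_\beta$. The only change is in the bound on $\|\mathrm F(\nabla u(s))\|_\alpha$: since $\mathrm F(\mathbf 0) \equiv F(0) = 0$, Proposition \ref{pr: mapping prop non linear F} applied with $f=\nabla u(s)$ and $g=\mathbf 0$ directly yields
\[
\|\mathrm F(\nabla u(s))\|_\alpha = \|\mathrm F(\nabla u(s))-\mathrm F(\mathbf 0)\|_\alpha \leq c\, \|\nabla u(s)\|_\alpha \bigl(1+\|\nabla u(s)\|_\alpha^2\bigr)^{1/2},
\]
without the additional additive constant $c$ that was present in the proof of Proposition \ref{pr: mapping of J in C}. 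Using $\|\nabla f\|_\alpha \leq c\|f\|_{\alpha+1}$ this becomes $c\,\|u(s)\|_{\alpha+1}(1+\|u(s)\|_{\alpha+1}^2)^{1/2}$.

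Plugging this into $(B)$ and following the same computation as in the proof of Proposition \ref{pr: mapping of J in C} (separating the factor $e^{-\rho t}=e^{-\rho(t-s)}e^{-\rho s}$, integrating the time singularity $\int_0^t (t-s)^{-\frac{\alpha+1-\beta}{2}} e^{-\rho(t-s)}\,\mathrm ds \leq c\, \rho^{\frac{\alpha-1-\beta}{2}}$, which is finite by Assumption A2) gives
\[
(B) \leq c\,\|b\|_{L_T^\infty \mathcal C^\beta}\, \rho^{\frac{\alpha-1-\beta}{2}}\, \|u\|^{(\rho)}_{C_T\mathcal C^{\alpha+1}} \bigl(1+\|u\|^2_{C_T\mathcal C^{\alpha+1}}\bigr)^{1/2}.
\]
Combining with the bound on $(A)$ yields \eqref{eq: bound for Ju for F(0)=0}. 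There is no genuine obstacle here, since the only substantive input is the vanishing of $|F(0)|$ in the $\|\mathrm F(\mathbf 0)\|_\alpha$ computation carried out in Proposition \ref{pr: mapping prop non linear F}; everything else is a cut-and-paste from the proof of Proposition \ref{pr: mapping of J in C}.
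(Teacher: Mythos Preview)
Your proposal is correct and follows exactly the approach indicated by the paper, which simply states that carrying out the same proof as Proposition~\ref{pr: mapping of J in C} in the special case $F(0)=0$ yields the result. The only modification is precisely the one you identify: since $\|\mathrm F(\mathbf 0)\|_\alpha = |F(0)| = 0$, the additive constant in the bound on $\|\mathrm F(\nabla u(s))\|_\alpha$ vanishes, and hence the ``$1+$'' outside the parenthesis in \eqref{eq: bound for Ju} disappears.
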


To show that $J$ is a contraction in a suitable (sub)space  we introduce a subset of $C_T\mathcal C^{\alpha+1}$ which depends on three parameters, $\rho$, $R$ and $T$. We define
\begin{equation}\label{eq: ball}
B^{(\rho)}_{R, T} := \left\{ f\in C_T\mathcal C^{\alpha+1} \, : \, \|f\|_{C_T\mathcal C^{\alpha+1}}^{(\rho)}\leq 2 R e^{-\rho T} \right\}.
\end{equation}
Now choosing $\rho$, $R$ and $T$ appropriately (depending on the initial condition $u_0$) one can show that $J$ is a contraction by applying Proposition \ref{pr: mapping of J in C} as illustrated below.
\begin{prop}\label{pr: J contraction}
Let Assumptions A1, A2 and A3 hold. Let $R_0$ be a given arbitrary constant. Then there exists $\rho_0$ large enough depending on $R_0$, and $T_0$ small enough depending  on $\rho_0$ such that 
\[
J: B_{R_0,T_0}^{(\rho_0)} \to B_{R_0,T_0}^{(\rho_0)},
\]
for any initial condition   $u_0\in \mathcal C^{\alpha+1}$ such that  $ \|u_0\|_{\alpha+1}\leq R_0$. Moreover for each $u,v \in C_{T_0}\mathcal C^{\alpha+1}$ then
\[
\|J(u)-J(v)\|_{C_{T_0}\mathcal C^{\alpha+1}}^{(\rho_0)} < \|u-v\|_{C_{T_0}\mathcal C^{\alpha+1}}^{(\rho_0)} .
\]
\end{prop}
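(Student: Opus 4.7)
The plan is to exploit the two a priori estimates from Propositions \ref{pr: mapping of J in C} and \ref{pr: bound for Iu - Iv}, both of which carry a common prefactor $\rho^{(\alpha-1-\beta)/2}$. By Assumption A2 we have $\beta > \alpha-1$, so this exponent is strictly negative and the prefactor can be made arbitrarily small by choosing $\rho$ large. This is the mechanism that simultaneously drives $J$ into the ball and forces it to contract.

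For the self-mapping property I would take $u \in B_{R_0,T_0}^{(\rho_0)}$ and use both the weighted bound $\|u\|^{(\rho_0)}_{C_{T_0}\mathcal{C}^{\alpha+1}} \leq 2R_0 e^{-\rho_0 T_0}$ and its unweighted consequence $\|u\|_{C_{T_0}\mathcal{C}^{\alpha+1}} \leq e^{\rho_0 T_0}\|u\|^{(\rho_0)}_{C_{T_0}\mathcal{C}^{\alpha+1}} \leq 2R_0$. Substituting these, together with $\|u_0\|_{\alpha+1} \leq R_0$, into \eqref{eq: bound for Ju} yields
\begin{equation*}
\|J(u)\|^{(\rho_0)}_{C_{T_0}\mathcal{C}^{\alpha+1}} \leq R_0 + C\rho_0^{(\alpha-1-\beta)/2}\bigl(1 + 2R_0(1+4R_0^2)^{1/2}\bigr).
\end{equation*}
I would first pick $\rho_0 = \rho_0(R_0)$ large enough that the second summand is bounded by $R_0/2$. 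Then choose $T_0 = T_0(\rho_0)$ small enough so that $2R_0 e^{-\rho_0 T_0} \geq 3R_0/2$, for example $T_0 \leq \rho_0^{-1}\ln(4/3)$. With these choices $\|J(u)\|^{(\rho_0)}_{C_{T_0}\mathcal{C}^{\alpha+1}} \leq 3R_0/2 \leq 2R_0 e^{-\rho_0 T_0}$, so $J(u) \in B_{R_0,T_0}^{(\rho_0)}$ as desired.

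For the contraction on the ball I would use $J(u)-J(v) = I(u)-I(v)$ together with Proposition \ref{pr: bound for Iu - Iv} and the unweighted bounds $\|u\|_{C_{T_0}\mathcal{C}^{\alpha+1}}, \|v\|_{C_{T_0}\mathcal{C}^{\alpha+1}} \leq 2R_0$ to obtain
\begin{equation*}
\|J(u)-J(v)\|^{(\rho_0)}_{C_{T_0}\mathcal{C}^{\alpha+1}} \leq C \rho_0^{(\alpha-1-\beta)/2}(1+8R_0^2)^{1/2} \|u-v\|^{(\rho_0)}_{C_{T_0}\mathcal{C}^{\alpha+1}}.
\end{equation*}
By possibly enlarging $\rho_0$ further (and correspondingly shrinking $T_0$ through the same relation $T_0 \leq \rho_0^{-1}\ln(4/3)$) the prefactor is driven strictly below $1$, giving the claimed strict inequality on the ball.

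The main subtlety is the joint calibration of the three parameters. Increasing $\rho_0$ suppresses the nonlinear contribution via the factor $\rho_0^{(\alpha-1-\beta)/2}$, but it simultaneously shrinks the weighted target radius $2R_0 e^{-\rho_0 T_0}$; this looks threatening because $J(u)$ already includes the term $P_\cdot u_0$ whose weighted norm is $\|u_0\|_{\alpha+1}$ at $t=0$ and hence cannot be made small. The resolution is to couple $T_0$ to $\rho_0$ via $T_0 = \Theta(1/\rho_0)$, which keeps $\rho_0 T_0$ bounded so that the target radius remains comparable to $R_0$, while the prefactor $\rho_0^{(\alpha-1-\beta)/2}$ can nevertheless be pushed to zero. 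This is also exactly why only a \emph{local} existence statement can be extracted from this approach.
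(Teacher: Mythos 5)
Your proof is correct and follows essentially the same route as the paper: it combines the a priori estimates from Propositions \ref{pr: mapping of J in C} and \ref{pr: bound for Iu - Iv}, exploits the negative exponent $(\alpha-1-\beta)/2$ to make the nonlinear contribution small by taking $\rho_0$ large, and then shrinks $T_0$ in tandem (keeping $\rho_0 T_0$ bounded) so that the weighted target radius $2R_0 e^{-\rho_0 T_0}$ stays comparable to $R_0$. The only difference is cosmetic: you bound the linear factor $\|u\|^{(\rho_0)}_{C_{T_0}\mathcal C^{\alpha+1}}$ crudely by $2R_0$ right away and impose a single smallness condition on $\rho_0$, whereas the paper keeps the $e^{-\rho T}$ factor in play and splits the calibration into three explicit inequalities (its equations (3.16)--(3.19)); the underlying mechanism and the coupling $T_0 \sim 1/\rho_0$ are identical, and you correctly identify this coupling as the reason the result is only local.
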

\begin{proof}
 We begin by taking $u\in B_{R_0,T}^{(\rho)}$ for some arbitrary parameters $ T$ and $\rho$. For this $u$ we have the following bounds
\[
\|u\|_{C_T\mathcal C^{\alpha+1}}^{(\rho)} \leq 2 R_0 e^{-\rho T}
\]
 and 
 \begin{equation}\label{eq: norm for u in B_R}
\|u\|_{C_T\mathcal C^{\alpha+1}} \leq 2 R_0 e^{-\rho T} e^{\rho T} = 2R_0.
 \end{equation}
Let  $u_0\in C^{\alpha+1}$ be such that $\|u_0\|_{\alpha+1}\leq R_0$. Then by Proposition \ref{pr: mapping of J in C}  we obtain 
\begin{align*}
\|J(u)\|_{C_T\mathcal C^{\alpha+1}}^{(\rho)} 
&\leq R_0 + C \rho^{\frac{\alpha-1-\beta}2} \left(1 +  2R_0 e^{-\rho T}(1 + 4R_0^2 )^{1/2} \right)\\
& = R_0 e^{-\rho T} \left( e^{\rho T}  +  \frac C {R_0} \rho^{\frac{\alpha-1-\beta}2} e^{\rho T}  + 2C\rho^{\frac{\alpha-1-\beta}2}(1 + 4R_0^2 )^{1/2} \right).
\end{align*}
To show that $J(u)\in B_{R_0,T}^{(\rho)}$ we need to pick  $\rho_0$ and $T_0 $ such that 
\begin{equation}\label{eq: bound to get a contraction}
e^{\rho T}  +  \frac C {R_0} \rho^{\frac{\alpha-1-\beta}2} e^{\rho T}  + 2C \rho^{\frac{\alpha-1-\beta}2}(1 + 4R_0^2 )^{1/2} \leq 2.
\end{equation} 
 This is done as follows. First we pick $\rho_0\geq 1 $ depending on $R_0$ and large enough such that the following three conditions hold
\begin{eqnarray}
2C \rho_0^{\frac{\alpha-1-\beta}2}(1 + 4R_0^2 )^{1/2} \leq \frac14 \label{eq: bound 1}\\
\frac C {R_0} \rho_0^{\frac{\alpha-1-\beta}2} \leq \frac14 \label{eq: bound 2}\\
C\rho_0^{\frac{\alpha-1-\beta}2} (1 + 8R_0^2 )^{1/2} <1.\label{eq: bound 3}
\end{eqnarray}
This is always possible since $\rho\mapsto \rho^{\frac{\alpha-1-\beta}2}$ is decreasing. Moreover this can be done independently of $T$. We also remark that the third bound is not needed to show that $J(u)\in B_{R_0,T}^{(\rho)}$ but will be needed below to show that $J$ is a contraction for the chosen set of parameters $R_0, \rho_0, T_0$.\\
Next we pick $T_0>0$ depending on $\rho_0, R_0$ and small enough such that 
\begin{equation}\label{eq: bound 4}
e^{\rho_0 T_0}\leq 1+\frac25.
\end{equation}
This is always possible since $T\mapsto e^{\rho_0 T}$ is increasing,  continuous and has minimum 1 at 0. \\
With these parameters, \eqref{eq: bound to get a contraction} is satisfied under the assumptions \eqref{eq: bound 1}, \eqref{eq: bound 2} and \eqref{eq: bound 4}. Indeed
\begin{equation*}
e^{\rho_0 T_0}  +  \frac C {R_0} \rho^{\frac{\alpha-1-\beta}2} e^{\rho_0 T_0}  + 2C \rho_0^{\frac{\alpha-1-\beta}2}(1 + 4R_0^2 )^{1/2} \leq 1+\frac25  +\frac 14(1+\frac25)+ \frac14 = 2.
\end{equation*}
It is left to prove that $J$ is a contraction on $B^{(\rho_0)}_{R_0, T_0}$. For this, it is enough to use Proposition \ref{pr: bound for Iu - Iv} for $u,v\in B^{(\rho_0)}_{R_0, T_0}$   
\begin{align*}
\|I(u)-I(v)\|_{C_{T_0}\mathcal C^{\alpha+1}}^{(\rho_0)} &\leq 
C \rho_0^{\frac{\alpha-1-\beta}2} (1+ 2(2R_0)^2  )^{1/2}\|u-v\|_{C_{T_0}\mathcal C^{\alpha+1}}^{(\rho)}\\
& < \|u-v\|_{C_{T_0}\mathcal C^{\alpha+1}}^{(\rho_0)},
\end{align*}
 where the last bound is ensured by \eqref{eq: bound 3}.
\end{proof}

Using the last result we can show that a unique solution exists locally (for small time $T_0$) in the whole space $C_{T_0}\mathcal C^{\alpha+1}$.
\begin{theorem}\label{thm: local fixed point for J}
Let Assumptions A1, A2 and A3 hold. 
Let  $u_0\in \mathcal C^{\alpha +1}$ be given. Then there exists a unique local mild solution $u$ to \eqref{eq: mild solution} in $C_{T_0}\mathcal C^{\alpha+1}$, where $T_0 $ is small enough and it is chosen as in Proposition \ref{pr: J contraction} (depending on the norm of $u_0$).
 \end{theorem}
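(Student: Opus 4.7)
The plan is a Banach fixed point argument, combined with a separate rescaling-in-time argument to upgrade uniqueness from a ball to the whole space $C_{T_0}\mathcal C^{\alpha+1}$. Given $u_0 \in \mathcal C^{\alpha+1}$, set $R_0 := \|u_0\|_{\alpha+1}$ (or $R_0 := \|u_0\|_{\alpha+1} + 1$ to avoid the degenerate case $u_0 \equiv 0$) and invoke Proposition \ref{pr: J contraction} to produce parameters $\rho_0 \geq 1$ and $T_0 > 0$ such that $J$ maps $B^{(\rho_0)}_{R_0, T_0}$ into itself and is a strict contraction there.

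Next, observe that $B^{(\rho_0)}_{R_0, T_0}$ is a closed ball in the Banach space $(C_{T_0}\mathcal C^{\alpha+1}, \|\cdot\|^{(\rho_0)}_{C_{T_0}\mathcal C^{\alpha+1}})$; since the equivalent norm $\|\cdot\|^{(\rho_0)}$ makes the ambient space complete and the ball is non-empty (it contains the zero function, as $0 \leq 2R_0 e^{-\rho_0 T_0}$), Banach's contraction principle yields a unique fixed point $u \in B^{(\rho_0)}_{R_0, T_0}$, which by construction is a mild solution of \eqref{eq: mild solution} on $[0,T_0]$.

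The only subtle point is uniqueness in the larger space $C_{T_0}\mathcal C^{\alpha+1}$, since Proposition \ref{pr: J contraction} furnishes a contraction only within the ball. Given any two mild solutions $u, v \in C_{T_0}\mathcal C^{\alpha+1}$ (necessarily of finite supremum norm), the shared term $P_\cdot u_0$ cancels and $u - v = I(u) - I(v)$. Applying Proposition \ref{pr: bound for Iu - Iv} for an arbitrary $\rho \geq 1$ yields
\begin{equation*}
\|u - v\|^{(\rho)}_{C_{T_0}\mathcal C^{\alpha+1}} \leq c \, \|b\|_{L^\infty_{T_0}\mathcal C^\beta} \, \rho^{\frac{\alpha - 1 - \beta}{2}} \bigl(1 + \|u\|^2_{C_{T_0}\mathcal C^{\alpha+1}} + \|v\|^2_{C_{T_0}\mathcal C^{\alpha+1}}\bigr)^{1/2} \|u - v\|^{(\rho)}_{C_{T_0}\mathcal C^{\alpha+1}}.
\end{equation*}
Since $\frac{\alpha - 1 - \beta}{2} < 0$ by Assumption A2, we may take $\rho$ large enough (depending on the norms of $u$ and $v$) to make the prefactor strictly less than $1$, forcing $\|u - v\|^{(\rho)}_{C_{T_0}\mathcal C^{\alpha+1}} = 0$ and hence $u = v$. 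The only real obstacle is this last step, which exploits the fact that the negative power of $\rho$ from Assumption A2 absorbs arbitrarily large (but finite) norms of the candidate solutions.
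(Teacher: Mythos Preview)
Your proof is correct and follows essentially the same approach as the paper: existence via Banach's contraction principle on the ball $B^{(\rho_0)}_{R_0,T_0}$ using Proposition~\ref{pr: J contraction}, and uniqueness in the whole space by applying Proposition~\ref{pr: bound for Iu - Iv} with $\rho$ chosen large enough (depending on the finite $C_{T_0}\mathcal C^{\alpha+1}$-norms of the two candidate solutions) so that the negative power $\rho^{(\alpha-1-\beta)/2}$ forces the prefactor below $1$. Your parenthetical remark about taking $R_0 = \|u_0\|_{\alpha+1}+1$ to avoid the degenerate case $u_0\equiv 0$ is a sensible addition, since the constraints on $\rho_0$ in Proposition~\ref{pr: J contraction} involve $R_0$ in a denominator.
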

\begin{proof}
Let $R_0=\|u_0\|_{\alpha+1}$ and $\rho_0$ and $T_0$ such that \eqref{eq: bound 1}--\eqref{eq: bound 4} are  satisfied. \\
\emph{Existence.}
By Proposition \ref{pr: J contraction} we know that the mapping $J$ is a contraction on $ B^{(\rho_0)}_{R_0, T_0}$   and so there exists a solution $u \in B^{(\rho_0)}_{R_0, T_0}$ which is unique in the latter subspace. \\
\emph{Uniqueness.}
Suppose that there are two solutions $u_1$ and $u_2$ in $C_{T_0}\mathcal C^{\alpha+1}$. Then obviously $u_i=  J(u_i)$ and $ \| u_i\|_{C_{T_0}\mathcal C^{\alpha+1}}< \infty$ for $i=1,2$. We set $r:= \max\{\| u_i\|_{C_{T_0}\mathcal C^{\alpha+1}}, i=1,2 \}$ (which only depends on $u_i$ and not on any $\rho$). 
By Proposition \ref{pr: bound for Iu - Iv} for any $\rho\geq 1$ we have that the $\rho$-norm of the difference $u_1-u_2$ is bounded by
\begin{align*}
\|u_1-&u_2\|_{C_{T_0}\mathcal C^{\alpha+1}}^{(\rho)} = 
\|I(u_1)-I(u_2)\|_{C_{T_0}\mathcal C^{\alpha+1}}^{(\rho)} \\
& \leq C \rho^{\frac{\alpha-1-\beta}2 } (1+ \|u_1\|^2_{C_{T_0} \mathcal C^{\alpha+1} } + \|u_2\|^2_{C_{T_0} \mathcal C^{\alpha+1} })^{1/2}\|u_1-u_2\|_{C_{T_0} \mathcal C^{\alpha+1} }^{(\rho)}\\
& \leq C \rho^{\frac{\alpha-1-\beta}2 } (1+ 2r^2)^{1/2}\|u_1-u_2\|_{C_{T_0} \mathcal C^{\alpha+1} }^{(\rho)}.
 \end{align*}
Choosing $\rho_0$ large enough such that $1- C \rho_0^{\frac{\alpha-1-\beta}2 } (1+ 2r^2)^{1/2} >0$ implies that $\|u_1-u_2\|_{C_{T_0} \mathcal C^{\alpha+1} }^{(\rho_0)} \leq 0$ and hence the difference must be 0 in the space $C_{T_0}\mathcal C^{\alpha+1}$, thus $u_1=u_2$.
 \end{proof}
 
\begin{rem}\label{rm:uniqueness1}
Note that in the proof of uniqueness of Theorem \ref{thm: local fixed point for J} we do not assume anything about the size of time $T_0$. Hence, if a solution to  \eqref{eq: mild solution} exists up to time $T$ in the space $C_T\mathcal C^{\alpha+1}$, then it is unique.
\end{rem}

An alternative existence and uniqueness result is shown below. A global in time solution is found  up to  any given time $T$, but in this case we have to restrict the choice of initial conditions $u_0$ to a set with  small norm (depending on $T$). Moreover we are able to show this result only under the extra condition that $F(0)=0$.

\begin{prop}\label{pr: fixed point for J for any T}
Let Assumptions A1, A2 and A3 hold. Assume $F(0)=0$. Let $T>0$ be given and arbitrary. Then there exists $\rho_0$ large enough such that for all $u_0\in B_{\frac12, T}^{(\rho_0)}$ then
\begin{equation}\label{eq: fixed point for J}
J: B_{1, T}^{(\rho_0)} \to B_{1, T}^{(\rho_0)}
\end{equation}
and $J$ is a contraction on $B_{1, T}^{(\rho_0)}$, namely for $u,v \in  B_{1, T}^{(\rho_0)} $ we have 
\begin{equation}\label{eq: fixed point for J bound}
\|J(u)-J(v)\|^{(\rho_0)}_{C_T\mathcal C^{\alpha+1}}< \|u-v\|^{(\rho_0)}_{C_T\mathcal C^{\alpha+1}}. 
\end{equation}
\end{prop}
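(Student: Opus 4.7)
The plan is to combine the a priori bound from Corollary \ref{cor: mapping of J in C} (which is available precisely because $F(0)=0$) with the Lipschitz-type estimate of Proposition \ref{pr: bound for Iu - Iv}, exploiting the negative exponent $\frac{\alpha-1-\beta}{2}<0$ granted by Assumption A2 in order to absorb every multiplicative constant by taking $\rho_0$ sufficiently large. The assumption $F(0)=0$ is crucial here: without it the bound \eqref{eq: bound for Ju} would carry an additive term of order $C\rho^{\frac{\alpha-1-\beta}{2}}$ which, unlike every other term in the argument, is not multiplied by $e^{-\rho T}$, and hence could not be dominated by $2e^{-\rho T}$ for fixed $T$ and large $\rho$.

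First I would interpret the hypothesis $u_0\in B_{\frac12,T}^{(\rho_0)}$ (with $u_0$ viewed as the constant-in-time element of $C_T\mathcal C^{\alpha+1}$) as the concrete bound $\|u_0\|_{\alpha+1}\leq e^{-\rho_0 T}$, noting that the $(\rho_0)$-supremum is attained at $t=0$. For any $u\in B_{1,T}^{(\rho_0)}$ the definition of the ball yields $\|u\|^{(\rho_0)}_{C_T\mathcal C^{\alpha+1}}\leq 2e^{-\rho_0 T}$ and, as in \eqref{eq: norm for u in B_R}, the uniform bound $\|u\|_{C_T\mathcal C^{\alpha+1}}\leq 2$. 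Plugging these into \eqref{eq: bound for Ju for F(0)=0} gives
\[
\|J(u)\|_{C_T\mathcal C^{\alpha+1}}^{(\rho_0)}\leq e^{-\rho_0 T}+C\rho_0^{\frac{\alpha-1-\beta}{2}}\cdot 2e^{-\rho_0 T}\cdot(1+4)^{1/2}=e^{-\rho_0 T}\bigl(1+2\sqrt{5}\,C\rho_0^{\frac{\alpha-1-\beta}{2}}\bigr),
\]
which is at most $2e^{-\rho_0 T}$ as soon as $2\sqrt{5}\,C\rho_0^{\frac{\alpha-1-\beta}{2}}\leq 1$; since the exponent is negative, this is achieved by choosing $\rho_0$ large enough, establishing \eqref{eq: fixed point for J}.

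For the contraction \eqref{eq: fixed point for J bound}, I would observe that $J(u)-J(v)=I(u)-I(v)$ (the initial-condition terms cancel) and apply Proposition \ref{pr: bound for Iu - Iv} with $\|u\|,\|v\|\leq 2$, obtaining
\[
\|J(u)-J(v)\|_{C_T\mathcal C^{\alpha+1}}^{(\rho_0)}\leq 3C\rho_0^{\frac{\alpha-1-\beta}{2}}\,\|u-v\|_{C_T\mathcal C^{\alpha+1}}^{(\rho_0)},
\]
and then enlarge $\rho_0$ once more, if necessary, so that $3C\rho_0^{\frac{\alpha-1-\beta}{2}}<1$. There is essentially no genuine obstacle beyond this bookkeeping: the argument is a routine balancing of constants, made possible by the interplay between $F(0)=0$ and Assumption A2. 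The price paid for a global-in-time conclusion is the very restrictive smallness condition $\|u_0\|_{\alpha+1}\leq e^{-\rho_0 T}$, which deteriorates exponentially in $T$ and explains why this result is complementary, rather than superior, to Theorem \ref{thm: local fixed point for J}.
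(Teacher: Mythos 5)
Your proof is correct and follows essentially the same route as the paper: you derive $\|u_0\|_{\alpha+1}\leq e^{-\rho_0 T}$ from the ball membership, plug the bounds $\|u\|^{(\rho_0)}\leq 2e^{-\rho_0 T}$ and $\|u\|_{C_T\mathcal C^{\alpha+1}}\leq 2$ into Corollary~\ref{cor: mapping of J in C} to obtain $\|J(u)\|^{(\rho_0)}\leq e^{-\rho_0 T}\bigl(1+2\sqrt{5}\,C\rho_0^{\frac{\alpha-1-\beta}{2}}\bigr)$, and then invoke Proposition~\ref{pr: bound for Iu - Iv} for the contraction estimate $3C\rho_0^{\frac{\alpha-1-\beta}{2}}$, choosing $\rho_0$ large enough to make both constants small. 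This matches the paper's proof, including the identification of $F(0)=0$ as the reason the additive term in \eqref{eq: bound for Ju} disappears.
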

\begin{proof}
We recall that for some given $R, \rho$ and $T$, the assumption $u_0\in B_{R, T}^{(\rho)} $ means that $ \|u_0\|^{(\rho)}_{C_T\mathcal C^{\alpha+1}}\leq 2Re^{-\rho T}$, see \eqref{eq: ball}.
Moreover $u_0$ does not depend on time hence $ \|u_0\|^{(\rho)}_{C_T\mathcal C^{\alpha+1}} = \|u_0\|_{\alpha+1}$
so  $u_0\in B_{\frac12, T}^{(\rho)}$ implies  
\begin{equation*}
\|u_0\|_{\alpha+1}\leq e^{-\rho T}. 
\end{equation*}
Using this and  Corollary \ref{cor: mapping of J in C} we have 
\begin{align*}
\|J(u)\|^{(\rho)}_{C_T\mathcal C^{\alpha+1}} 
&\leq \|u_0\|_{\alpha+1} + C \rho^{\frac{\alpha-1-\beta}2} \|u\|^{(\rho)}_{ C_T \mathcal C^{\alpha + 1}} (1 + \|u\|^2_{ C_T \mathcal C^{\alpha + 1}})^{1/2}\\
&\leq  e^{-\rho T} + C \rho^{\frac{\alpha-1-\beta}2} \|u\|^{(\rho)}_{ C_T \mathcal C^{\alpha + 1}} (1 + \|u\|^2_{ C_T \mathcal C^{\alpha + 1}})^{1/2}.
\end{align*}
Let $u\in B_{1,T}^{(\rho)}$. Then  $\|u\|^{(\rho)}_{C_T\mathcal C^{\alpha+1}}\leq 2e^{-\rho T}$ and 
\begin{equation} \label{eq: norm of u in B}
\|u\|_{C_T\mathcal C^{\alpha+1}}\leq 2.
\end{equation}
 Thus the bound above becomes
\begin{align*}
\|J(u)\|^{(\rho)}_{C_T\mathcal C^{\alpha+1}} 
&\leq e^{-\rho T} + C  \rho^{\frac{\alpha-1-\beta}2} 2e^{-\rho T} (1 + 4)^{1/2}\\
& = 2 e^{-\rho T} (\frac12 + C\sqrt 5 \rho^{\frac{\alpha-1-\beta}2} ).
\end{align*}
We choose $\bar \rho_0$  such that $\frac12 + C\sqrt 5 \bar \rho_0^{\frac{\alpha-1-\beta}2} =1 $, and since the function   $\rho \mapsto \rho^{\frac{\alpha-1-\beta}2} $ is decreasing, for each $\rho_0\geq \bar\rho_0$ we have 
\begin{equation}\label{eq: rho not bar}
\frac12 + C\sqrt 5  \rho_0^{\frac{\alpha-1-\beta}2} \leq 1.
\end{equation}
Then for $\rho=\rho_0$ we have $\|J(u)\|^{(\rho_0)}_{C_T\mathcal C^{\alpha+1}} \leq 2 e^{-\rho_0 T} $ which implies that $J(u)\in B_{1, T}^{(\rho_0)}$ and this shows \eqref{eq: fixed point for J}.

To show \eqref{eq: fixed point for J bound}, let $u,v\in B_{1,T}^{(\rho_0)}\subset C_T\mathcal C^{\alpha+1}$ with $\rho_0\geq \bar\rho_0$. Then by Proposition \ref{pr: bound for Iu - Iv} and by \eqref{eq: norm of u in B} 
\begin{align*}
\|J(u)-&J(v)\|_{C_T\mathcal C^{\alpha+1}}^{(\rho_0)} \\
& \leq C \rho_0^{\frac{\alpha-1-\beta}{2}} \left( 1+ \|u\|_{C_T\mathcal C^{\alpha+1}}^2 + \|v\|_{C_T\mathcal C^{\alpha+1}}^2 \right)^{1/2}  \|u-v\|_{C_T\mathcal C^{\alpha+1}}^{(\rho_0)}\\
& \leq C \rho_0^{\frac{\alpha-1-\beta}{2}} \left( 1+ 4+4 \right)^{1/2}  \|u-v\|_{C_T\mathcal C^{\alpha+1}}^{(\rho_0)}\\
& \leq 3C \rho_0^{\frac{\alpha-1-\beta}{2}} \|u-v\|_{C_T\mathcal C^{\alpha+1}}^{(\rho_0)}.
\end{align*}
We now chose $\rho_0\geq \bar \rho_0$ large enough so that 
\begin{equation}\label{eq: rho not}
3C \rho_0^{\frac{\alpha-1-\beta}{2}} <1
\end{equation}
 and the proof is concluded.
\end{proof}

\begin{theorem}\label{thm: fixed point for J for small u0}
Let Assumptions A1, A2 and A3 hold.  Let $T>0$ be given and let $F(0)=0$. Then there exists $\delta>0$ depending on $T$ such that for each $u_0 $ with $\|u_0\|_{\alpha+1}\leq \delta$  there exists a unique solution $u\in C_T\mathcal C^{\alpha +1} $ to \eqref{eq: mild solution}.
\end{theorem}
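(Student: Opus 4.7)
The plan is to assemble the theorem directly from Proposition \ref{pr: fixed point for J for any T} (for existence in a specific ball) and from a uniqueness argument of the same flavor as in Theorem \ref{thm: local fixed point for J} (to upgrade uniqueness to the full space $C_T\mathcal C^{\alpha+1}$). Given $T>0$, let $\rho_0$ be the value produced by Proposition \ref{pr: fixed point for J for any T}, and set
\[
\delta := e^{-\rho_0 T}.
\]
Since $u_0$ is independent of time, $\|u_0\|_{C_T\mathcal C^{\alpha+1}}^{(\rho_0)} = \|u_0\|_{\alpha+1}$, so any $u_0$ with $\|u_0\|_{\alpha+1}\leq \delta$ satisfies $u_0\in B^{(\rho_0)}_{1/2,T}$.

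For existence, I would observe that $B^{(\rho_0)}_{1,T}$ is a closed subset of the Banach space $(C_T\mathcal C^{\alpha+1},\|\cdot\|^{(\rho_0)}_{C_T\mathcal C^{\alpha+1}})$, and by Proposition \ref{pr: fixed point for J for any T} the operator $J$ maps this closed ball into itself and is a strict contraction on it. Banach's fixed point theorem then provides a unique $u\in B^{(\rho_0)}_{1,T}$ with $J(u)=u$, i.e.\ a mild solution in the sense of \eqref{eq: mild solution}.

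For uniqueness in the full space $C_T\mathcal C^{\alpha+1}$ (not merely in the ball), suppose $u_1,u_2\in C_T\mathcal C^{\alpha+1}$ are both mild solutions and set $r:=\max\{\|u_1\|_{C_T\mathcal C^{\alpha+1}},\|u_2\|_{C_T\mathcal C^{\alpha+1}}\}<\infty$, noting that $r$ depends only on the given $u_i$'s and not on any auxiliary parameter. Proposition \ref{pr: bound for Iu - Iv} applied to $u_1-u_2 = I(u_1)-I(u_2)$ yields, for every $\rho\geq 1$,
\[
\|u_1-u_2\|^{(\rho)}_{C_T\mathcal C^{\alpha+1}} \leq C\,\rho^{\frac{\alpha-1-\beta}{2}}(1+2r^2)^{1/2}\,\|u_1-u_2\|^{(\rho)}_{C_T\mathcal C^{\alpha+1}}.
\]
By Assumption A2 one has $\alpha-1-\beta<0$, so the prefactor can be made strictly less than $1$ by taking $\rho$ sufficiently large, forcing $\|u_1-u_2\|^{(\rho)}_{C_T\mathcal C^{\alpha+1}}=0$ and hence $u_1=u_2$.

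The main point that makes this work, and the only step requiring care, is the interplay between the three parameters $\rho_0$, $T$ and $\delta$: smallness of the initial datum is used to start the iteration inside $B^{(\rho_0)}_{1,T}$ (this is where $F(0)=0$ enters, via Corollary \ref{cor: mapping of J in C}), whereas largeness of $\rho$ — exploiting the negative power $\rho^{(\alpha-1-\beta)/2}$ — is used both for the contraction and for the global uniqueness. I do not expect any genuine obstacle beyond keeping these parameter choices consistent; no new analytic estimate is needed on top of what has already been established.
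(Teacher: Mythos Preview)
Your proposal is correct and mirrors the paper's proof essentially line for line: the paper also sets $\delta=e^{-\rho_0 T}$ with $\rho_0$ from Proposition~\ref{pr: fixed point for J for any T}, invokes that proposition for existence via a contraction on $B^{(\rho_0)}_{1,T}$, and handles uniqueness in $C_T\mathcal C^{\alpha+1}$ by the same $\rho$-enlargement argument from Theorem~\ref{thm: local fixed point for J}.
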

\begin{proof}
\emph{Existence.}
We choose $\rho_0$ according to \eqref{eq: rho not} and \eqref{eq: rho not bar}. Let $\delta = e^{-\rho_0 T}$. Then the assumption $\|u_0\|_{\alpha+1}\leq \delta$ means $u_0\in B_{\frac12, T}^{(\rho_0)}$ and by Proposition \ref{pr: fixed point for J for any T} we know that the mapping $J$ is a contraction on $ B^{(\rho_0)}_{1, T}$. Thus there exists a unique fixed point $u$ in $ B^{(\rho_0)}_{1, T}$  which is a solution. \\
\emph{Uniqueness.} 
This is shown like in the uniqueness proof of Theorem \ref{thm: local fixed point for J}, with $T$ instead of $T_0$.
\end{proof}
\begin{rem}\label{rm:uniqueness2}
Note that in the proof of uniqueness of Theorem \ref{thm: local fixed point for J} we do not actually use the assumption   $\|u_0\|_{\alpha+1}\leq \delta$, so if $F(0)=0$ then  uniqueness holds for any initial condition and any time $T$, when a solution exists. 
\end{rem}

We now show continuity of the solution $u$ with respect to the initial condition $u_0$. This is done in the following proposition both   for the case of existence and uniqueness of a solution $u$ for an arbitrary initial condition and a sufficiently  small time $T_0$ (Theorem \ref{thm: local fixed point for J}) and for the case of existence and uniqueness of a solution $u$  for an arbitrary  time $T$ and for a sufficiently small (in norm)  initial condition $u_0$ (Theorem \ref{thm: fixed point for J for small u0}).

\begin{prop}\label{pr: continuity wrt u0}
\begin{itemize}
\item[(i)] Let the assumptions of Theorem \ref{thm: local fixed point for J} hold and let  $R_0>0$ be arbitrary and fixed. Let $u$ be the unique solution  found in Theorem \ref{thm: local fixed point for J} on $[0,T_0] $ with initial condition $u_0$ such that $\|u_0\|\leq R_0$ and where $T_0$ depends on $R_0$. Then $u$ is continuous with respect to the initial condition $u_0$, namely
\[
\|u\|^{(\rho_0)}_{C_{T_0}\mathcal C^{\alpha+1}} \leq 2 \|u_0\|_{\alpha+1}
\]
for $\rho_0$ large enough.
\item[(ii)]  Let the assumptions of Theorem \ref{thm: fixed point for J for small u0} hold and let  $T>0$ be arbitrary and fixed.    Let $u$ be the unique solution  found in Theorem    \ref{thm: fixed point for J for small u0}  on $[0,T]$ with initial condition $u_0$ such that $\|u_0\|\leq e^{-\rho_0 T} $ for $\rho_0$ large enough. Then the unique solution $u$ is continuous with respect to the initial condition $u_0$, namely
\[
\|u\|^{(\rho_0)}_{C_T\mathcal C^{\alpha+1}} \leq 2 \|u_0\|_{\alpha+1}.
\] 
\end{itemize}
\end{prop}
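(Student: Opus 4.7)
The plan is to derive the bound directly from the fixed point identity $u = J(u) = P_\cdot u_0 + I(u)$ by estimating its $(\rho_0)$-norm and absorbing the nonlinear contribution into the left-hand side. The key lever is Proposition~\ref{pr: bound for Iu - Iv}, which produces a multiplicative factor $\rho_0^{(\alpha-1-\beta)/2}$ whose exponent is negative by Assumption~A2, so it can be made as small as desired by further enlarging $\rho_0$.

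For part~(i), I would start from
\[
\|u\|^{(\rho_0)}_{C_{T_0}\mathcal C^{\alpha+1}} \leq \|P_\cdot u_0\|^{(\rho_0)}_{C_{T_0}\mathcal C^{\alpha+1}} + \|I(u)-I(\mathbf 0)\|^{(\rho_0)}_{C_{T_0}\mathcal C^{\alpha+1}} + \|I(\mathbf 0)\|^{(\rho_0)}_{C_{T_0}\mathcal C^{\alpha+1}},
\]
where the first term is bounded by $\|u_0\|_{\alpha+1}$ (from the definition of the $(\rho_0)$-norm together with boundedness of $P_t$ on $\mathcal C^{\alpha+1}$), the third term vanishes when $F(0)=0$, and the middle term is controlled via Proposition~\ref{pr: bound for Iu - Iv} with $v=\mathbf 0$. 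Since $u\in B^{(\rho_0)}_{R_0,T_0}$ by Theorem~\ref{thm: local fixed point for J}, the a priori bound $\|u\|_{C_{T_0}\mathcal C^{\alpha+1}}\leq 2R_0$ from \eqref{eq: norm for u in B_R} applies, giving
\[
\|u\|^{(\rho_0)}_{C_{T_0}\mathcal C^{\alpha+1}} \leq \|u_0\|_{\alpha+1} + C\rho_0^{\frac{\alpha-1-\beta}{2}}(1+4R_0^2)^{1/2}\|u\|^{(\rho_0)}_{C_{T_0}\mathcal C^{\alpha+1}}.
\]
Enlarging $\rho_0$ further (beyond the existence thresholds \eqref{eq: bound 1}--\eqref{eq: bound 3}) so that the prefactor is at most $\tfrac12$, and shrinking $T_0$ via \eqref{eq: bound 4} accordingly, absorption of the last term onto the left-hand side delivers $\|u\|^{(\rho_0)}_{C_{T_0}\mathcal C^{\alpha+1}}\leq 2\|u_0\|_{\alpha+1}$. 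The solution on the (possibly smaller) $[0,T_0]$ coincides with the restriction of the original one thanks to Remark~\ref{rm:uniqueness1}.

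Part~(ii) follows exactly the same template. Here $F(0)=0$ is built into the hypothesis of Theorem~\ref{thm: fixed point for J for small u0}, so $I(\mathbf 0)=0$ straight from the definition of $I$ and Corollary~\ref{cor: mapping of J in C} (equivalently, Proposition~\ref{pr: bound for Iu - Iv} with $v=\mathbf 0$) is immediately applicable. The ball constraint $u\in B^{(\rho_0)}_{1,T}$ supplies $\|u\|_{C_T\mathcal C^{\alpha+1}}\leq 2$ via \eqref{eq: norm of u in B}, so the absorption prefactor becomes $C\sqrt{5}\,\rho_0^{(\alpha-1-\beta)/2}$, again forced to be $\leq\tfrac12$ by one more enlargement of $\rho_0$ compatible with \eqref{eq: rho not bar} and \eqref{eq: rho not}. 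The same absorption yields $\|u\|^{(\rho_0)}_{C_T\mathcal C^{\alpha+1}}\leq 2\|u_0\|_{\alpha+1}$.

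The only genuine obstacle is the simultaneous book-keeping of all the constraints on $\rho_0$: the final choice must dominate both the existence thresholds of Propositions~\ref{pr: J contraction} and~\ref{pr: fixed point for J for any T} and the new absorption threshold. Since each such condition is a lower bound on $\rho_0$ and the map $\rho\mapsto\rho^{(\alpha-1-\beta)/2}$ is decreasing, the required maximum is finite; any consequent shrinking of $T_0$ in part~(i) is harmless by Remark~\ref{rm:uniqueness1}, so no consistency issue arises and the fixed point found in Theorems~\ref{thm: local fixed point for J} and~\ref{thm: fixed point for J for small u0} is the same one for which the continuity estimate is established.
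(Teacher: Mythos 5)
Your argument is correct and mirrors the paper's proof: start from $u=J(u)$, bound $\|P_\cdot u_0\|^{(\rho_0)}$ by $\|u_0\|_{\alpha+1}$, control $I(u)$ via Proposition~\ref{pr: bound for Iu - Iv} with $v=\mathbf 0$ (equivalently Corollary~\ref{cor: mapping of J in C}, which is where $F(0)=0$ is tacitly used), invoke the a priori sup-norm bounds $\|u\|_{C_{T_0}\mathcal C^{\alpha+1}}\leq 2R_0$ from \eqref{eq: norm for u in B_R} resp.\ $\|u\|_{C_T\mathcal C^{\alpha+1}}\leq 2$ from \eqref{eq: norm of u in B}, and absorb. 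The one unnecessary step is enlarging $\rho_0$ (and shrinking $T_0$): condition~\eqref{eq: bound 1} already gives $C\sqrt{1+4R_0^2}\,\rho_0^{(\alpha-1-\beta)/2}\leq\tfrac18\leq\tfrac12$ and~\eqref{eq: rho not bar} already gives $C\sqrt{5}\,\rho_0^{(\alpha-1-\beta)/2}\leq\tfrac12$, so the absorption works with the very same $\rho_0,T_0$ fixed in Theorems~\ref{thm: local fixed point for J} and~\ref{thm: fixed point for J for small u0}, avoiding any appeal to Remark~\ref{rm:uniqueness1} or any change of the time interval in the statement.
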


\begin{proof}
\emph{(i)} Let $\rho_0$ be chosen according to \eqref{eq: bound 1}--\eqref{eq: bound 3} and $T_0$ according to \eqref{eq: bound 4}. Take $u_0$ such that $\|u_0\|_{\alpha+1}\leq R_0$. Then by Proposition  \ref{pr: J contraction}  we have $ J: B_{R_0, T_0}^{(\rho_0)}\to B_{R_0, T_0}^{(\rho_0)}$ and so by  \eqref{eq: norm for u in B_R} the unique solution $u$ given in Theorem \ref{thm: local fixed point for J} satisfies $\|u\|_{C_{T_0}\mathcal C^{\alpha+1}} \leq 2R_0$ for any initial conditions $u_0$ with $\|u_0\|_{\alpha+1}\leq R_0$. 
 Using this and Corollary \ref{cor: mapping of J in C} we have
\begin{align*}
\|u\|^{(\rho_0)}_{C_{T_0}\mathcal C^{\alpha+1}}
 &= \|J(u)\|^{(\rho_0)}_{C_{T_0}\mathcal C^{\alpha+1}} \\
 &\leq  \|u_0\|_{\alpha+1} + C \rho_0^{\frac{\alpha-1-\beta}{2}} \|u\|^{(\rho_0)}_{C_{T_0}\mathcal C^{\alpha+1}} (1+ \|u\|_{C_{T_0}\mathcal C^{\alpha+1}})^{1/2}\\
 &\leq \|u_0\|_{\alpha+1} + \sqrt{1+4R_0^2}  C \rho_0^{\frac{\alpha-1-\beta}{2}} \|u\|^{(\rho_0)}_{C_T\mathcal C^{\alpha+1}} .
\end{align*}
By the choice of $\rho_0$ according to \eqref{eq: bound 1} we have $2\sqrt{1+4R_0^2}  C \rho_0^{\frac{\alpha-1-\beta}{2}} \leq \frac14$ hence
\[
\|u\|^{(\rho_0)}_{C_{T_0}\mathcal C^{\alpha+1}} \leq \|u_0\|_{\alpha+1} + \frac12 \|u\|^{(\rho_0)}_{C_T\mathcal C^{\alpha+1}} ,
\] 
and rearranging terms we conclude.

\emph{(ii)} Let $\rho_0$ be chosen according to \eqref{eq: rho not bar}. Then for all $u_0\in B_{\frac12, T}^{(\rho_0)}$ (that is for $\|u_0\|_{\alpha+1}\leq e^{-\rho_0 T}$) we have $J: B_{1, T}^{(\rho_0)} \to B_{1, T}^{(\rho_0)}$ by Proposition \ref{pr: fixed point for J for any T}. In particular, the unique solution $u$ given in Theorem \ref{thm: fixed point for J for small u0} belongs to  $B_{1, T}^{(\rho_0)}$, and \eqref{eq: norm of u in B} holds, that is $\|u\|_{C_T\mathcal C^{\alpha+1}}\leq 2$. Using this and Corollary \ref{cor: mapping of J in C}  we have
\begin{align*}
\|u\|^{(\rho_0)}_{C_T\mathcal C^{\alpha+1}}
 &= \|J(u)\|^{(\rho_0)}_{C_T\mathcal C^{\alpha+1}} \\
 &\leq  \|u_0\|_{\alpha+1} + C \rho_0^{\frac{\alpha-1-\beta}{2}} \|u\|^{(\rho_0)}_{C_T\mathcal C^{\alpha+1}} (1+ \|u\|_{C_T\mathcal C^{\alpha+1}})^{1/2}\\
 &\leq \|u_0\|_{\alpha+1} + \sqrt 5 C \rho_0^{\frac{\alpha-1-\beta}{2}} \|u\|^{(\rho_0)}_{C_T\mathcal C^{\alpha+1}} .
\end{align*}
By the choice of $\rho_0$ according to \eqref{eq: rho not bar} we have $\sqrt 5 C \rho_0^{\frac{\alpha-1-\beta}{2}} \leq \frac12$ and  we conclude as in part (i). 
\end{proof}

Finally we conclude this section by investigating the blow-up for the  solution $u$ to the PDE. It is still an open problem to show whether the solution $u$ blows up or not, but we have the following result that states that if blow-up occurs, then it does so in finite time. 

\begin{prop}\label{pr: blow up}
Let $u_0\in \mathcal C^{\alpha +1}$ and $T>0$ be given. 
Then one of the following statements holds:
\begin{itemize}
\item[(a)] There exists a time $t^*\in [0,T]$ such that $\lim_{s\to t^*}\|u(s)\|_{\alpha+1}=\infty$; Or
\item[(b)] there exists a solution $u$ for all $t\in[0,T]$. 
\end{itemize}
\end{prop}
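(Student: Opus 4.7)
The strategy is to build a maximal solution by concatenating local solutions from Theorem \ref{thm: local fixed point for J}, and then obtain the dichotomy by a contradiction argument based on the fact that the local lifespan in that theorem depends on the initial datum only through the bound on its norm.

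First I set
\[
t^{*}:=\sup\{t\in[0,T]:\text{a mild solution to \eqref{eq: mild solution} exists on }[0,t]\}.
\]
Theorem \ref{thm: local fixed point for J} guarantees this set is non-empty, so $t^{*}>0$. By the uniqueness statement recorded in Remark \ref{rm:uniqueness1}, any two local solutions defined on $[0,t_1]$ and $[0,t_2]$ with $t_1\leq t_2<t^{*}$ agree on $[0,t_1]$, so the family of local solutions glues into a single function $u:[0,t^{*})\to \mathcal C^{\alpha+1}$ that satisfies \eqref{eq: mild solution} on $[0,t]$ for every $t<t^{*}$. If a mild solution exists on all of $[0,T]$ then statement (b) holds and we are done. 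Otherwise I show statement (a) holds with this $t^{*}$.

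Assume for contradiction that $\lim_{s\to t^{*}}\|u(s)\|_{\alpha+1}=\infty$ fails. Then there exist $M<\infty$ and an increasing sequence $s_n\uparrow t^{*}$ with $\|u(s_n)\|_{\alpha+1}\leq M$ for all $n$. Applying Theorem \ref{thm: local fixed point for J} with initial condition $u(s_n)$ and $R_0=M$ yields a time $T_0=T_0(M)>0$ \emph{independent of} $n$ and a unique local mild solution $\tilde u_n$ on $[s_n,s_n+T_0]$ starting from $u(s_n)$. Choose $n$ large enough that $s_n+T_0>t^{*}$ and define
\[
\bar u(t):=\begin{cases} u(t), & t\in[0,s_n], \\ \tilde u_n(t), & t\in[s_n,\min(s_n+T_0,T)]. \end{cases}
\]

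It remains to check that $\bar u$ is a mild solution on $[0,\min(s_n+T_0,T)]$. Continuity at $s_n$ is automatic since $\tilde u_n(s_n)=u(s_n)$; for $t\in[s_n,\min(s_n+T_0,T)]$, applying $P_{t-s_n}$ to the mild identity satisfied by $u$ at time $s_n$ and using the semigroup law $P_{t-s_n}P_{s_n-r}=P_{t-r}$ gives
\[
P_{t-s_n}u(s_n)=P_tu_0+\int_0^{s_n}P_{t-r}\bigl(\mathrm F(\nabla u(r))b(r)\bigr)\,\mathrm dr,
\]
which combined with the mild equation for $\tilde u_n$ on $[s_n,t]$ produces \eqref{eq: mild solution} for $\bar u$. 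Thus $\bar u$ is a mild solution on an interval strictly longer than $[0,t^{*})$, contradicting either the definition of $t^{*}$ (when $s_n+T_0\leq T$) or the failure of (b) (when $s_n+T_0>T$, since the restriction to $[0,T]$ is then a solution on all of $[0,T]$). The key technical point, and the only real obstacle, is precisely this uniformity: Theorem \ref{thm: local fixed point for J} produces a lifespan $T_0$ that depends on the initial condition only through $R_0=M$, which is exactly what lets one leap past $t^{*}$ in a single step no matter how close $s_n$ is to $t^{*}$.
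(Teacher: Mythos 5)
Your argument is essentially the same as the paper's: both hinge on the observation that Theorem \ref{thm: local fixed point for J} gives a local lifespan $T_0$ depending only on a norm bound $R_0$, so that if the norm does not diverge one can restart from a point near $t^*$ and extend past it, yielding a contradiction. You fill in some details the paper elides (the explicit gluing of local solutions via Remark \ref{rm:uniqueness1} and the semigroup-law verification that the concatenation is still a mild solution), but the underlying idea and structure match.
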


\begin{proof}
Assume that  $\limsup_{s\to t^*} \|u(s)\|_{\alpha +1} = \infty$ for some $t^*\in [0,T]$. Suppose  moreover by contradiction that $\liminf_{s\to t^*}\|u(s)\|_{\alpha+1}<\infty$. Then we can find $R_0>0$ and a sequence $t_k\to t^*$ such that  $\|u(t_k)\|_{\alpha+1} <R_0$ for all $k$. Let us now restart the PDE from $u(t_k)$ and apply Theorem \ref{thm: local fixed point for J}: We know that there exists a solution for the interval $[t_k, t_k+T_0]$, where $T_0>0$ depends on $R_0$ but not on $k$. Thus we are able to extend the solution $u$ past $t^*$ because as $k\to\infty$ we have $t_k+T_0\to t^*+T_0$. Thus it cannot be that $\limsup_{s\to t^*} \|u(s)\|_{\alpha +1} = \infty$ and $\liminf_{s\to t^*}\|u(s)\|_{\alpha+1}<\infty$ for some $t^*\in [0,T]$. This means that if $\limsup_{s\to t^*} \|u(s)\|_{\alpha +1} = \infty $ for some $t^*\in[0,T]$ then actually also $\lim_{s\to t^*} \|u(s)\|_{\alpha +1} = \infty$, which is case (a).
Otherwise, if  $\limsup_{s\to t^*}\|u(s)\|_{\alpha+1}<\infty$ for all  $t^*\in [0,T]$
then a global solution on $[0,T]$ must exists, which is case (b).
\end{proof}

Further research is needed to show either global in time solution or the existence of a finite blow-up time. The difficulty here is due to the quadratic non-linearity and the fact that this term is multiplied by the distributional coefficient. This prevents us to apply classical techniques such as the Cole-Hopf transformation which would be used in the special case $F(x)=x^2$ and $b\equiv 1$ to linearise the equation.

\section{A global existence result}\label{sc:global}
In this section we provide a global result on existence and uniqueness of a solution upon imposing further assumptions on the non-linearity $F$.  In particular, we will exclude the quadratic case but still allow for a rich class of non-linear functions. 
\begin{description}
\item[A4] \textbf{Further assumption on non-linear term $F$.}
\emph{Let $F: \mathbb R^d \to \mathbb R$ be globally Lipschitz, i.e.,  there exists a positive constant $L$ such that  for all $x, y\in\mathbb R^d$ we have
\[
|F(x)-F(y)|\leq \tilde L |x-y|_d.
\]}
\end{description}
Assumption A4 implies that $F$ has sub-linear growth, that is, there exists a positive constant $\tilde l$ such that for all $x\in \mathbb R^d$ 
\[
|F(x)|\leq \tilde l (1+ |x|_d).
\]
Moreover also  the  operator $\mathrm F: \mathcal C^\alpha \to \mathcal C^\alpha$ has sub-linear growth in $\mathcal C^\alpha$, namely there exists $c>0$ such that for all $f\in\mathcal C^\alpha$ we have 
\begin{equation}\label{eq:Fsublinear}
\|\mathrm F(f)\|_\alpha \leq c (1+\|f\|_\alpha). 
\end{equation}
Indeed 
\begin{align*}
\|\mathrm F(f)\|_\alpha 
& = \sup_{x\in\mathbb R^d} |Ff(x)| + \sup_{x\in\mathbb R^d} \sup_{|y|_d\leq 1} \frac{|Ff(x+y) - Ff(x)|}{|y|_d^\alpha}\\
&\leq \sup_{x\in\mathbb R^d} \tilde l (1+|f(x)) +\sup_{x\in\mathbb R^d}  \sup_{|y|_d\leq 1} \frac{\tilde L |f(x+y) - f(x)|}{|y|_d^\alpha}\\
& \leq c (1+\|f\|_\alpha). 
\end{align*}
This extra assumption allows us to find a priori bounds on the solution, as follows.

\begin{prop}[A priori bounds]\label{pr:priori}
Let Assumptions A1, A2, A3 and A4 hold.  Let $T<\infty$ be an arbitrary time and $u_0\in \mathcal C^{\alpha+1}$.
If there exists $u\in C_T\mathcal C^{\alpha+1}$ such that  
\begin{equation}\label{eq:lu}
u(t) = \lambda P_t u_0 + \lambda \int_0^t P_{t-r} (F(\nabla u(r)) b(r)) \ud r
\end{equation}
where $\lambda \in [0,1]$ is fixed, then for all $t\in[0,T]$ it must hold
\[
\| u (t)\|_{\alpha+1} \leq K
\]
for some finite constant $K$ which depends only on $T, b$ and $u_0$. In particular, $\|u\|_{C_T \mathcal C^{\alpha+1}}\leq K$.
\end{prop}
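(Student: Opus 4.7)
The plan is to derive an integral inequality for the scalar function $\phi(t) := \|u(t)\|_{\alpha+1}$ and then apply a singular (Henry-type) Gronwall lemma. Because the constant $\lambda \in [0,1]$ plays no quantitative role beyond being bounded by $1$, I will drop it immediately after taking norms.

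\textbf{Step 1: Assemble the basic estimate.} Taking the $\mathcal{C}^{\alpha+1}$-norm of \eqref{eq:lu} and using $\lambda \leq 1$ together with the contraction of $P_t$ on $\mathcal{C}^{\alpha+1}$, I get
\[
\|u(t)\|_{\alpha+1} \leq \|u_0\|_{\alpha+1} + \int_0^t \|P_{t-r}(F(\nabla u(r)) b(r))\|_{\alpha+1}\, \ud r.
\]
For the integrand, first apply the Schauder estimate \eqref{eq: mapping Pt Besov spaces} with $\theta = (\alpha+1-\beta)/2$ (which lies in $(0,1)$ by Assumption A2, so the resulting time-singularity is integrable) to move from $\mathcal{C}^\beta$ up to $\mathcal{C}^{\alpha+1}$. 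Next apply Bony's product estimate \eqref{eq: Bony's estimates}, valid since $\alpha+\beta>0$ by A2, to bound
\[
\|F(\nabla u(r))\, b(r)\|_\beta \leq c\, \|F(\nabla u(r))\|_\alpha \|b(r)\|_\beta.
\]

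\textbf{Step 2: Use sublinearity of $\mathrm F$.} This is where Assumption A4 enters. By the sublinear bound \eqref{eq:Fsublinear} and the Bernstein-type inequality $\|\nabla f\|_\alpha \leq c\|f\|_{\alpha+1}$ already used in the proof of Proposition \ref{pr: bound for Iu - Iv},
\[
\|F(\nabla u(r))\|_\alpha \leq c(1 + \|\nabla u(r)\|_\alpha) \leq c(1 + \|u(r)\|_{\alpha+1}).
\]
Combining with Step 1 and setting $\gamma := (\alpha+1-\beta)/2 \in (0,1)$, I obtain
\[
\|u(t)\|_{\alpha+1} \leq \|u_0\|_{\alpha+1} + c\,\|b\|_{L^\infty_T \mathcal{C}^\beta} \int_0^t (t-r)^{-\gamma}\bigl(1 + \|u(r)\|_{\alpha+1}\bigr)\, \ud r.
\]
The constant-$1$ part of the integrand contributes at most $c\|b\|_{L^\infty_T \mathcal{C}^\beta} \cdot T^{1-\gamma}/(1-\gamma)$, so absorbing it into a time-dependent constant $A(T, u_0, b)$ gives
\[
\|u(t)\|_{\alpha+1} \leq A(T, u_0, b) + B(b) \int_0^t (t-r)^{-\gamma} \|u(r)\|_{\alpha+1}\, \ud r,
\]
with $B(b) = c\|b\|_{L^\infty_T \mathcal{C}^\beta}$.

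\textbf{Step 3: Singular Gronwall.} Since $\gamma \in (0,1)$, the standard Henry-Gronwall inequality (for integral inequalities with a weakly singular kernel) applies and yields
\[
\|u(t)\|_{\alpha+1} \leq A(T, u_0, b)\, E_{1-\gamma}\!\bigl(B(b)\,\Gamma(1-\gamma)\, t^{1-\gamma}\bigr),
\]
where $E_{1-\gamma}$ is the Mittag-Leffler function (or equivalently any explicit bound coming from iterating the inequality). Taking the supremum over $t\in[0,T]$ produces the required constant $K = K(T, u_0, b)$ independent of $\lambda$.

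\textbf{Expected obstacle.} The argument is essentially routine once the pieces are lined up; the only delicate point is verifying that the exponents are in the admissible ranges (so that both Bony's estimate and the Schauder estimate produce an integrable singularity in $r$), and invoking the correct form of the singular Gronwall lemma. The crucial qualitative input is the sublinearity \eqref{eq:Fsublinear} supplied by A4: without it one would have an extra $\|u(r)\|_{\alpha+1}^2$ on the right-hand side, which would prevent the Gronwall step from closing and explains why the quadratic case is excluded.
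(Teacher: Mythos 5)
Your proof is correct and follows essentially the same route as the paper: take the $\mathcal C^{\alpha+1}$-norm of \eqref{eq:lu}, apply the Schauder and Bony estimates, invoke the sublinearity from A4 to obtain a linear integral inequality with the weakly singular kernel $(t-r)^{-(\alpha+1-\beta)/2}$, and close with a Gronwall argument. You are slightly more careful than the paper in pointing out that the weakly singular kernel requires the Henry-type (singular) Gronwall inequality rather than the classical version, which is a legitimate refinement of what the paper simply calls ``an application of Gronwall's lemma.''
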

Note that when $\lambda=1$ then \eqref{eq:lu} reduces to \eqref{eq: mild solution}. By slight abuse of notation, in this result we use $u$ for the solution of \eqref{eq:lu} for $\lambda\in[0,1]$.
\begin{proof}
Let $u\in C_T\mathcal C^{\alpha+1}$ be a solution of \eqref{eq:lu}, that is 
\begin{equation}\label{eq:e1}
u(t) = \lambda P_t u_0 + \lambda I_t(u).
\end{equation}
 Note that $\mathrm F(\nabla u)\in C_T \mathcal C^{\alpha+1}\subset L^\infty_T \mathcal C^{\alpha+1}$ and so $I(u) \in C_T \mathcal C^{\alpha+1}$ by Lemma  \ref{lm: continuity of I} and by Assumption A3. Now we apply \eqref{eq: mapping Pt Besov spaces} and assumption A4 to get 
 \begin{align*}
 \|I_t(u)\|_{\alpha+1} 
 &\leq \int_0^t \| P_{t-s} (F(\nabla u (s)) b(s) \|_{\alpha+1} \ud s \\
& \leq  c \| b \|_{L^\infty_T\mathcal C^\beta}  \int_0^t ({t-s})^{-\frac{\alpha+1-\beta}2}(1+ \|\nabla u (s) \|_{\alpha}) \ud s \\
& \leq  c \| b \|_{L^\infty_T\mathcal C^\beta}  \int_0^t ({t-s})^{-\frac{\alpha+1-\beta}2}(1+ \| u (s) \|_{\alpha+1}) \ud s .
 \end{align*}
Taking the $\mathcal C^{\alpha+1}$ norm of \eqref{eq:e1} and plugging the above estimate in, we obtain
\begin{align*}
 \| u(t)\|_{\alpha+1} 
\leq &  \lambda \|P_t u_0\|_{\alpha+1} + \lambda \|I_t( u)\|_{\alpha+1}\\
 \leq & c \| u_0\|_{\alpha+1} + c \| b \|_{L^\infty_T\mathcal C^\beta}  T^{\frac{-\alpha+1+\beta}2} \\
 & + c \| b \|_{L^\infty_T\mathcal C^\beta}
 \int_0^t  ({t-s})^{-\frac{\alpha+1-\beta}2}\| u (s) \|_{\alpha+1} \ud s . 
\end{align*}
Now an application of Gronwall's lemma and the evaluation of the supremum over $t\in[0,T]$ allows to conclude. 
\end{proof}

Our strategy to show global existence of a solution of \eqref{eq: mild solution} is to apply Schaefer's fixed point theorem. To this aim, for $\eps>0$   let us define   the space $ C_T^\eps\mathcal C^{\alpha+1} $ as the collection of all functions $f:[0,T]\times \mathbb R^d \to \mathbb R$ with finite $\| \cdot \|_{\eps, \alpha+1}$ norm, where the latter  is given by 
\[
\|f\|_{\eps,{\alpha+1}}: = \sup_{0\leq t \leq T} \|f(t)\|_{\alpha+1} + \sup_{0\leq s< t \leq T} \frac{\|f(t)-f(s)\|_{\alpha+1} }{(t-s)^\eps}.
\]
In order to  apply Schaefer's fixed point theorem, it is convenient to work in $ C_T^\eps\mathcal C^{\alpha+1}$ rather than $C_T\mathcal C^{\alpha+1}$, the reason being that balls in $ C_T^{\eps'}\mathcal C^{\alpha'+1}$ are pre-compact sets in $ C_T^\eps\mathcal C^{\alpha+1}$ for $\eps'>\eps$ and $\alpha'>\alpha$. 

For ease of reading we set
\[
G_r(u) := \mathrm F(\nabla u(r)) b(r).
\]
Using Assumption A4  and \eqref{eq: Bony's estimates} we have that for $u(r)\in \mathcal C^{\alpha+1}$ then
\begin{equation}\label{eq:G}
\|G_r(u)\|_\beta \leq  c (1+ \| u(r)\|_{\alpha+1} ),
\end{equation}
where $c$ depends on $b$ and $\tilde l$. 
Moreover by Proposition \ref{pr: mapping prop non linear F} we have that for $u(r), v(r) \in \mathcal C^{\alpha+1}$ then
\begin{equation}\label{eq:Gdiff}
\|G_r(u)-G_r(v)\|_\beta \leq  c (1+ \| u(r)\|^2_{\alpha+1}+\| v(r)\|^2_{\alpha+1} )^{1/2}  \|u(r)-v(r) \|_{\alpha+1},
\end{equation}
where $c$ depend on $b, l, L$ and $d$.

We now state and prove three preparatory results that are the keys steps needed to apply Schaefer's fixed point theorem.

\begin{lemma}\label{lm:J1}
 Let Assumptions A1, A2 and A3 hold and fix $\eps> 0$ such that $\alpha -1 -\beta +\eps <0$. Let $u_0\in \mathcal C^{\alpha+1+ 2\eps +\nu}$ for some small $\nu>0 $.  
If $u\in  C_T \mathcal C^{\alpha+1} $ then  $J(u)\in  C_T^{\eps'}\mathcal C^{\alpha'+1} $ for some $\eps'>\eps$ and $\alpha'>\alpha$, and 
 \begin{equation}\label{eq:J1}
 \|J(u)\|_{\eps', \alpha'+1} \leq c\|u_0\|_{\alpha+1+2\eps +\nu} +c T^{\frac{-\alpha'+1+\beta-2\eps'}2}(1+ \|u\|_{C_T \mathcal C^{\alpha+1}}).
 \end{equation}
\end{lemma}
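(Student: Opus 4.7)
The plan is to decompose $J(u) = P_\cdot u_0 + I_\cdot(u)$ and estimate each summand separately in the $\|\cdot\|_{\eps', \alpha'+1}$ norm, which amounts to controlling a supremum-in-time of the spatial norm together with a H\"older-in-time seminorm. I will choose $\eps' > \eps$ and $\alpha' > \alpha$ just slightly larger than $\eps, \alpha$, so that the strict inequality assumed on $\eps$ combined with the slack $\nu$ yields the two crucial inequalities
\[
\alpha' + 2\eps' \leq \alpha + 2\eps + \nu \qquad\text{and}\qquad \alpha' + 2\eps' < 1+\beta,
\]
the second of which will guarantee integrability of the singular kernel $(t-r)^{-(\alpha'+1+2\eps'-\beta)/2}$ after paying for the extra time regularity.

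For the homogeneous part $P_\cdot u_0$, the spatial norm is handled by the contraction property of the heat semigroup on Besov spaces, yielding $\|P_t u_0\|_{\alpha'+1} \leq c\|u_0\|_{\alpha+1+2\eps+\nu}$ uniformly in $t\in[0,T]$. For the time increment I would write $P_t u_0 - P_s u_0 = (P_{t-s} - 1) P_s u_0$ and apply \eqref{eq: mapping Pt-I Besov spaces} with $\theta = \eps'$ and $\gamma = \alpha'+1+2\eps'$, then absorb the semigroup back using $\alpha'+1+2\eps' \leq \alpha + 1 + 2\eps + \nu$ to bound the result by $c|t-s|^{\eps'}\|u_0\|_{\alpha+1+2\eps+\nu}$.

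For the inhomogeneous part $I_\cdot(u)$, I first bound the spatial norm via \eqref{eq: mapping Pt Besov spaces} combined with the sub-linear estimate \eqref{eq:G} (which in turn relies on Assumption A4 applied to $G_r(u) = \mathrm F(\nabla u(r))b(r)$):
\[
\|I_t(u)\|_{\alpha'+1} \leq c(1+\|u\|_{C_T \mathcal C^{\alpha+1}}) \int_0^t (t-r)^{-(\alpha'+1-\beta)/2} \ud r,
\]
which is finite since $\alpha' + 1 - \beta < 2$ and yields $cT^{(1-\alpha'+\beta)/2}(1+\|u\|_{C_T \mathcal C^{\alpha+1}})$. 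For the time increment, I would use the semigroup identity
\[
I_t(u) - I_s(u) = (P_{t-s}-1)I_s(u) + \int_s^t P_{t-r} G_r(u) \ud r,
\]
bound the second summand directly via Schauder by $c(t-s)^{(1-\alpha'+\beta)/2}(\cdots)\leq c(t-s)^{\eps'} T^{(1-\alpha'+\beta-2\eps')/2}(\cdots)$ thanks to $\alpha' + 2\eps' < 1+\beta$, and treat the first summand through \eqref{eq: mapping Pt-I Besov spaces} with $\theta=\eps'$ to obtain $c|t-s|^{\eps'}\|I_s(u)\|_{\alpha'+1+2\eps'}$; the latter norm is then controlled by the same Schauder-plus-integration argument, convergent precisely because $\alpha' + 2\eps' < 1+\beta$. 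Collecting the four contributions and rewriting the $T$-powers in terms of the combined exponent delivers \eqref{eq:J1}.

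The main technical obstacle is the simultaneous satisfaction of the two inequalities singled out above; the buffer $\nu > 0$ on the initial condition's regularity and the strict inequality in the assumption on $\eps$ provide exactly enough slack to pick $\alpha', \eps'$ strictly larger than $\alpha, \eps$ while still keeping $\alpha' + 2\eps' < 1+\beta$. Careful bookkeeping of the $T$-powers arising in each of the four estimates is then required to extract precisely the exponent $(-\alpha'+1+\beta-2\eps')/2$ appearing in \eqref{eq:J1}.
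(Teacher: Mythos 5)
Your proof follows essentially the same route as the paper's: the same decomposition of $P_t u_0 - P_s u_0$ into $(P_{t-s}-1)P_s u_0$ and of $I_t(u)-I_s(u)$ into $(P_{t-s}-1)I_s(u)$ plus $\int_s^t P_{t-r}G_r(u)\,\mathrm{d}r$ (the paper just writes the first piece as $\int_0^s(P_{t-s}-1)P_{s-r}G_r(u)\,\mathrm{d}r$, which is identical by linearity), the same Schauder estimates \eqref{eq: mapping Pt Besov spaces}--\eqref{eq: mapping Pt-I Besov spaces}, and the same choice of $\alpha'>\alpha$, $\eps'>\eps$ subject to $\alpha'+2\eps'<1+\beta$ and $\alpha'+2\eps'\le\alpha+2\eps+\nu$. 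You also rightly flagged that the sub-linear bound \eqref{eq:G} relies on Assumption A4, which the lemma's hypotheses omit even though the paper's own proof invokes it through the same estimate.
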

\begin{rem}
Note that the parameter $\eps$ in   Lemma \ref{lm:J1} could in principle betaken equal zero, in which case we would only need $u_0\in \mathcal C^{\alpha+1+\nu}$ and $\eps'>0$. Later on however, $\eps$ will be chosen strictly greater than zero, hence we state and prove the result for $\eps>0$.
\end{rem}
\begin{proof}[Proof of Lemma \ref{lm:J1}]
First we note that it is always possible to pick $\eps >0$ such that $\alpha -1 -\beta +\eps <0$, because  $\alpha -1 -\beta <0$ by assumption A2. Let $u \in C_T \mathcal C^{\alpha+1}$. Moreover let us pick any $\alpha'>\alpha$ small enough such that $\alpha' -1 -\beta <0$ and $\alpha'+1 < \alpha+1+\nu$. Then we can easily see that for all $t\in[0,T]$ we have $J_t(u)\in \mathcal C^{\alpha'+1}$ as follows. 
\begin{align*}
\|J_t(u)\|_{\alpha'+1}
= &  \|P_t u_0 + \int_0^t P_{t-r} G_r(u) \ud r \|_{\alpha'+1}\\\nonumber
\leq &  \|P_t u_0\|_{\alpha'+1} + \int_0^t \| P_{t-r} G_r(u) \|_{\alpha'+1}  \ud r\\\nonumber
\leq &  c\| u_0\|_{\alpha'+1} + \int_0^t (t-r)^{-\frac{\alpha'+1-\beta}2} \|G_r(u) \|_{\beta}  \ud r\\\nonumber
\leq &  c\| u_0\|_{\alpha'+1} +  c T^{\frac{-\alpha'+1+\beta}2} (1+\|u\|_{C_T \mathcal C^{\alpha+1}}),
\end{align*}
where we have used \eqref{eq: mapping Pt Besov spaces} and \eqref{eq: mapping Pt-I Besov spaces} in the second inequality, and \eqref{eq:G} in the last inequality. Note that $-\alpha'+1+\beta>0$ by construction, and $u_0\in   \mathcal C^{\alpha+1+2\eps+\nu}\subset  \mathcal C^{\alpha'+1}$.

In order to show that $J(u)\in  C_T^{\eps'}\mathcal C^{\alpha'+1}$ we need to control the $\eps'$-H\"older semi-norm. We now choose $\eps'>\eps$ small enough such that  $\alpha' -1 -\beta +2\eps'<0$ and $\alpha' +1 +2\eps' < \alpha + 1 + 2\eps + \nu$, which is always possible. Then $u_0\in \mathcal C^{\alpha'+1+2\eps'}$ and  we express the difference $J_t(u)- J_s(u)$ for all $0\leq s <t\leq T$ as
\begin{align}\label{eq:Jint}
\|J_t(u)- J_s(u)\|_{\alpha'+1} 
\leq  &  \|(P_{t-s}- I) (P_s u_0)\|_{\alpha'+1} \\ \nonumber
&+ \|\int_0^s (P_{t-s}- I) (P_{s-r} G_r(u))\ud r\|_{\alpha'+1}\\ \nonumber
&+ \|\int_s^t P_{t-r}  G_r(u) \ud r\|_{\alpha'+1}\\ \nonumber
=&: M_1 + M_2 + M_3. 
\end{align}
Using \eqref{eq: mapping Pt Besov spaces} we get for the first term
\[
M_1 \leq (t-s)^{\eps'} \|P_s u_0\|_{\alpha'+1+2\eps'} \leq c (t-s)^{\eps'} \|u_0\|_{\alpha'+1+2\eps'},
\]
and $u_0\in   \mathcal C^{\alpha+1+2\eps+\nu}\subset  \mathcal C^{\alpha'+1+2\eps'}$ by choice of $\alpha'$ and $\eps'$.\\
The second term can be bounded using \eqref{eq: mapping Pt Besov spaces}, \eqref{eq: mapping Pt-I Besov spaces} and \eqref{eq:G}, and produces a singularity integrable in time by choice of the parameters. We get
\begin{align*}
M_2 
& \leq  \int_0^s  (t-s)^{\eps'}\| P_{s-r} G_r(u)\|_{\alpha'+1+2\eps'} \ud r\\
&  \leq (t-s)^{\eps'} s^{\frac{-\alpha'+1+\beta-2\eps'}2} c (1 + \|u\|_{C_T \mathcal C^{\alpha+1}})\\
&  \leq (t-s)^{\eps'} T^{\frac{-\alpha'+1+\beta-2\eps'}2} c (1 + \|u\|_{C_T \mathcal C^{\alpha+1}}).
\end{align*}
The third term is similar, and using  \eqref{eq: mapping Pt Besov spaces} and \eqref{eq:G} we obtain
\begin{align*}
M_3 
&\leq\int_s^t   (t-r)^{-\frac{\alpha'+1-\beta}2} \| G_r(u)\|_{\alpha'+1} \ud r\\
&\leq\int_s^t   (t-r)^{-\frac{\alpha'+1-\beta}2} \| G_r(u)\|_{\alpha'+1} \ud r\\
&\leq (t-s)^{\eps'} (t-s)^{\frac{-\alpha'+1+\beta-2\eps'}2}  c (1 + \|u\|_{C_T \mathcal C^{\alpha+1}})\\
&\leq (t-s)^{\eps'} T^{\frac{-\alpha'+1+\beta-2\eps'}2}  c (1 + \|u\|_{C_T \mathcal C^{\alpha+1}}).
\end{align*}
Putting everything together we get 
\begin{align*}
\|J(u)\|_{\eps', \alpha'+1} 
=& \sup_{0\leq t\leq T} \|J_t(u)\|_{\alpha'+1} + \sup_{0\leq s< t\leq T} \frac{\|J_t(u)- J_s(u)\|_{\alpha'+1}}{ (t-s)^{\eps'}}\\
 \leq&     c\| u_0\|_{\alpha'+1} +  c T^{\frac{-\alpha'+1+\beta}2} (1+\|u\|_{C_T \mathcal C^{\alpha+1}})\\
 & + c \|u_0\|_{\alpha'+1+2\eps'} + 2 T^{\frac{-\alpha'+1+\beta-2\eps'}2}  c (1 + \|u\|_{C_T \mathcal C^{\alpha+1}})\\
 \leq &c\|u_0\|_{\alpha+1+2\eps+\nu} +c T^{\frac{-\alpha'+1+\beta-2\eps'}2}(1+ \|u\|_{C_T \mathcal C^{\alpha+1}}),
\end{align*}
and the proof is complete. 
\end{proof}

\begin{rem}\label{rm:holder}
Applying  Lemma \ref{lm:J1} to the unique local solution $u\in C_T\mathcal C^{\alpha+1}$ found in Theorem \ref{thm: local fixed point for J} and in Theorem \ref{thm: fixed point for J for small u0} we obtain that the unique mild solution is not only continuous in time, but it is actually smoother, more precisely $u\in  C_T^{\eps'}\mathcal C^{\alpha'+1}$, provided that $u_0\in\mathcal C^{\alpha+1+2\eps+\nu}$  for some small $\nu>0$ and $\eps>0$ chosen as in Lemma \ref{lm:J1}.
\end{rem}

\begin{lemma}\label{lm:J2}
Let Assumptions A1, A2 and A3 hold and let us choose $\eps >0 $ according to Lemma \ref{lm:J1}. Then the operator $J:  C_T^{\eps}\mathcal C^{\alpha+1} \to  C_T^{\eps}\mathcal C^{\alpha+1} $ is continuous.
\end{lemma}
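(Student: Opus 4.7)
The plan is to reduce the claim to a local Lipschitz estimate
\[
\|J(u)-J(v)\|_{\eps,\alpha+1}\;\leq\;C\bigl(T,\|u\|_{C_T\mathcal C^{\alpha+1}},\|v\|_{C_T\mathcal C^{\alpha+1}}\bigr)\,\|u-v\|_{\eps,\alpha+1},
\]
from which continuity of $J$ on $C_T^\eps\mathcal C^{\alpha+1}$ is immediate: any convergent sequence there is bounded in $C_T\mathcal C^{\alpha+1}$, so the Lipschitz constant can be chosen uniformly along the sequence. Since $J(u)-J(v)=I(u)-I(v)$, the initial datum contribution $P_t u_0$ drops out and it suffices to control the $(\eps,\alpha+1)$-norm of $I(u)-I(v)=\int_0^{\cdot}P_{\cdot-r}\bigl(G_r(u)-G_r(v)\bigr)\ud r$.

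For the supremum-in-time part of the norm I would essentially repeat the computation of Proposition \ref{pr: bound for Iu - Iv}: apply Schauder's estimate \eqref{eq: mapping Pt Besov spaces} with $\theta=(\alpha+1-\beta)/2$, use Bony's bound \eqref{eq: Bony's estimates} to dispatch the product with $b(r)$, and invoke the non-linear estimate \eqref{eq:Gdiff} to get the pointwise-in-time control $\|G_r(u)-G_r(v)\|_\beta\leq c(1+\|u\|_{C_T\mathcal C^{\alpha+1}}^2+\|v\|_{C_T\mathcal C^{\alpha+1}}^2)^{1/2}\|u-v\|_{C_T\mathcal C^{\alpha+1}}$. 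The resulting time integral converges thanks to Assumption A2 and produces a bound of the desired form. For the $\eps$-H\"older semi-norm of $I(u)-I(v)$ I would mirror the splitting \eqref{eq:Jint} used in Lemma \ref{lm:J1}, but applied to $G_r(u)-G_r(v)$ in place of $G_r(u)$. Since $I_0(u)=I_0(v)=0$, the $M_1$-type boundary term involving $P_s u_0$ is absent; only the analogues of $M_2$ and $M_3$ survive, and the same combination of \eqref{eq: mapping Pt Besov spaces}--\eqref{eq: mapping Pt-I Besov spaces} together with \eqref{eq:Gdiff} yields a factor $(t-s)^\eps$ multiplied by a constant depending on $T$ and on the $C_T\mathcal C^{\alpha+1}$-norms of $u$ and $v$, times $\|u-v\|_{C_T\mathcal C^{\alpha+1}}\leq\|u-v\|_{\eps,\alpha+1}$.

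The only technical care-point I foresee is that for the time singularity appearing in the H\"older estimate to be integrable one needs $\alpha-1-\beta+2\eps<0$, slightly stronger than the condition $\alpha-1-\beta+\eps<0$ used in Lemma \ref{lm:J1}; this is easily arranged by shrinking $\eps$ if necessary and it is harmless for all subsequent applications, since those only ever require $\eps$ positive and small enough. Beyond this bookkeeping I do not anticipate any real obstacle: the argument is structurally identical to Proposition \ref{pr: bound for Iu - Iv} and Lemma \ref{lm:J1}, with the Lipschitz-type bound \eqref{eq:Gdiff} on $G_r(u)-G_r(v)$ playing the role that the growth bound \eqref{eq:G} on $G_r(u)$ played in the proof of Lemma \ref{lm:J1}.
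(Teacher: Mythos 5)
Your proposal is correct and follows essentially the same route as the paper: bound the sup-norm of $J(u)-J(v)=I(u)-I(v)$ as in Proposition \ref{pr: bound for Iu - Iv} (with $\rho=1$), and bound the $\eps$-H\"older semi-norm by mirroring the $M_2$, $M_3$ splitting from Lemma \ref{lm:J1} with \eqref{eq:Gdiff} in place of \eqref{eq:G}, the $M_1$ boundary term being absent since the $P_t u_0$ contributions cancel. Your observation that the H\"older bound actually requires $\alpha-1-\beta+2\eps<0$ rather than the weaker $\alpha-1-\beta+\eps<0$ stated in Lemma \ref{lm:J1} is a genuine catch: the same stronger condition is already needed inside the proof of Lemma \ref{lm:J1} itself (to choose $\eps'>\eps$ with $\alpha'-1-\beta+2\eps'<0$), so the hypothesis as written is slightly imprecise, and shrinking $\eps$ as you suggest is the right fix and is harmless for the later applications.
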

\begin{proof}
From Lemma \ref{lm:J1}, the fact that $\eps'>\eps$ and $\alpha'>\alpha$ and the embeddings $ C_T^{\eps'}\mathcal C^{\alpha'+1}\subset  C_T^{\eps}\mathcal C^{\alpha+1}\subset C_T\mathcal C^{\alpha+1}$ we have that $J:  C_T^{\eps}\mathcal C^{\alpha+1} \to  C_T^{\eps}\mathcal C^{\alpha+1} $. To show continuity we take $u,v \in  C_T^{\eps}\mathcal C^{\alpha+1}$ and bound the sup norm and the H\"older semi-norm of the difference  $J(u) - J(v)$.

 The sup norm of $J(u) - J(v)$ is bounded by Propositions \ref{pr: bound for Iu - Iv} (with $\rho=1$) together with  the fact that the embedding $ C_T^{\eps}\mathcal C^{\alpha+1}\subset C_T\mathcal C^{\alpha+1}$ is continuous. Then one has
\[
\sup_{0\leq t\leq T} \|J_t(u)-J_t(v)\|_{\alpha+1} \leq c(1+\|u\|_{\eps, \alpha+1}^2+\|v\|_{\eps, \alpha+1}^2)^{1/2} \|u-v\|_{\eps, \alpha+1}.
\]
The H\"older semi-norm of  $J(u) - J(v)$   is bounded by splitting the integral similarly to what was done in \eqref{eq:Jint}. One obtains
\begin{align*}
\|J_t(u)-&J_t(v)- J_s(u)+J_s(v)\|_{\alpha+1} \\
 \leq & \|\int_0^s (P_{t-s}- I) (P_{s-r} \left( G_r(u)-G_r(v) \right) ) \ud r\|_{\alpha+1}\\ 
&+ \|\int_s^t P_{t-r}  \left( G_r(u)-G_r(v) \right)  \ud r\|_{\alpha+1}.
\end{align*}
Then we proceed similarly as for the bounds of $M_2$ and $M_3$
in the proof of Lemma \ref{lm:J1}, but using   \eqref{eq:Gdiff} instead of \eqref{eq:G}, and with $\eps, \alpha$ instead of $\eps', \alpha'$, to obtain
\begin{align*}
\|J_t(u)-&J_t(v)- J_s(u)+J_s(v)\|_{\alpha+1} \\
\leq & (t-s)^\eps \left( s^{\frac{-\alpha+1+\beta}2} + (t-s)^{\frac{-\alpha+1+\beta-2\eps}2} \right) \times \\
 &\times c(1+\|u\|_{\eps, \alpha+1}+\|v\|_{\eps, \alpha+1})^{1/2} \|u-v\|_{\eps, \alpha+1}.
\end{align*}
Thus
\begin{align*}
\sup_{0\leq s< t\leq T} &\frac{\| J_t(u)-J_t(v)- J_s(u)+J_s(v)\|_{\alpha+1} }{(t-s)^\eps}\\
&\leq c T^{\frac{-\alpha+1+\beta-2\eps}2}(1+\|u\|_{\eps, \alpha+1}+\|v\|_{\eps, \alpha+1})^{1/2} \|u-v\|_{\eps, \alpha+1}
\end{align*}
and the proof is complete. 
\end{proof}

\begin{lemma}\label{lm:J3}
Let Assumptions A1, A2, A3 and A4 hold and let $\eps$ be chosen as in Lemma \ref{lm:J1}. Let $u_0\in \mathcal C^{\alpha+1+ 2\eps +\nu}$ for some small $\nu>0 $. Then the set 
\[
\Lambda:=\{u\in C_T^\eps\mathcal C^{\alpha+1} \text{ such that } u=\lambda J(u) \text{ for some } \lambda \in[0,1]\}
\]
is bounded in $ C_T^\eps\mathcal C^{\alpha+1} $.
\end{lemma}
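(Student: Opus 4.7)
The plan is to combine the a priori bound from Proposition \ref{pr:priori} with the smoothing estimate of Lemma \ref{lm:J1} to control the stronger H\"older norm of elements of $\Lambda$. Since the identity $u = \lambda J(u)$ can be rewritten as $u(t) = \lambda P_t u_0 + \lambda \int_0^t P_{t-r} G_r(u)\,\ud r$, which is exactly equation \eqref{eq:lu}, elements of $\Lambda$ automatically satisfy the hypothesis of Proposition \ref{pr:priori}. Assumption A4 is crucial here, because it is what makes the a priori argument go through via Gronwall's lemma; without it the quadratic term would not yield a global-in-time bound.

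First, I would fix $u \in \Lambda$ with associated parameter $\lambda \in [0,1]$. The embedding $C_T^\eps \mathcal C^{\alpha+1} \subset C_T \mathcal C^{\alpha+1}$ places $u$ in the space where Proposition \ref{pr:priori} applies, giving a constant $K = K(T, b, u_0)$, independent of $\lambda$ and of the particular $u$, such that
\[
\|u\|_{C_T \mathcal C^{\alpha+1}} \leq K.
\]
Next, I would apply Lemma \ref{lm:J1} to this $u$, which is legitimate because $u_0 \in \mathcal C^{\alpha+1+2\eps+\nu}$ by hypothesis. That yields $J(u) \in C_T^{\eps'} \mathcal C^{\alpha'+1}$ for some $\eps' > \eps$ and $\alpha' > \alpha$, together with the quantitative estimate
\[
\|J(u)\|_{\eps', \alpha'+1} \leq c \|u_0\|_{\alpha+1+2\eps+\nu} + c T^{\frac{-\alpha'+1+\beta-2\eps'}{2}}(1+K).
\]
The right-hand side depends only on $T$, $b$, $u_0$ and the fixed parameters, so it is a uniform constant $M$ over all $u \in \Lambda$.

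Finally, I would use the continuous embedding $C_T^{\eps'} \mathcal C^{\alpha'+1} \hookrightarrow C_T^\eps \mathcal C^{\alpha+1}$ (which holds since $\eps' > \eps$ and $\alpha' > \alpha$) to deduce $\|J(u)\|_{\eps, \alpha+1} \leq c' M$. Combining with $u = \lambda J(u)$ and $|\lambda| \leq 1$ gives
\[
\|u\|_{\eps, \alpha+1} = \lambda \|J(u)\|_{\eps, \alpha+1} \leq c' M,
\]
with $c'M$ independent of $u \in \Lambda$, which is exactly the required boundedness.

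I do not expect any real obstacle: the two heavy lifts, namely the a priori bound and the smoothing of $J$, are already established. The only point to be slightly careful about is to observe that all the constants produced by Proposition \ref{pr:priori} and Lemma \ref{lm:J1} depend only on the data $(T, b, u_0, \alpha, \beta, \eps, \nu, L, \tilde L, d)$ and not on $\lambda$ or on the specific $u \in \Lambda$, so that the resulting bound on $\|u\|_{\eps, \alpha+1}$ is genuinely uniform over $\Lambda$.
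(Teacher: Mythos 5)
Your proof is correct and follows essentially the same route as the paper's: both use the a priori bound of Proposition~\ref{pr:priori} to control $\|u\|_{C_T\mathcal C^{\alpha+1}}$ uniformly in $\lambda$, then feed that into the smoothing estimate of Lemma~\ref{lm:J1}, and finally pass through the embeddings and $|\lambda|\leq 1$ to bound $\|u\|_{\eps,\alpha+1}$. Your write-up is merely a bit more explicit than the paper's about the continuous embeddings $C_T^{\eps'}\mathcal C^{\alpha'+1}\hookrightarrow C_T^\eps\mathcal C^{\alpha+1}\hookrightarrow C_T\mathcal C^{\alpha+1}$, which the paper leaves implicit.
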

\begin{proof}
Let $u^*\in \Lambda$, that is $ u^*=\lambda J(u^*) $ for some $\lambda \in[0,1]$. Applying Lemma \ref{lm:J1} and Proposition \ref{pr:priori}  we get
\begin{align*}
\|u^*\|_{\eps, \alpha+1} \leq & \|J(u^*)\|_{\eps, \alpha+1}\\
\leq & c\|u_0\|_{\alpha +1+2\eps+\nu}  + cT^{\frac{-\alpha'+1+\beta-2\eps}2} (1+\|u^*\|_{C_T \mathcal C^{\alpha+1}} )\\
\leq & c\|u_0\|_{\alpha +1+2\eps+\nu} + cT^{\frac{-\alpha'+1+\beta-2\eps}2}   (1+ K),
\end{align*}
where the constant on the right hand side is finite and  independent of $u^*$. 
\end{proof}

\begin{theorem}\label{thm:global}
Let Assumptions A1, A2, A3 and A4 hold and let $\eps>0$ be chosen according to Lemma \ref{lm:J1}. If $u_0 \in \mathcal C^{\alpha+1+2\eps+\nu}$ for some small $\nu>0$, then there exists a global mild  solution $u$ of \eqref{eq: PDE non lin Cauchy prb} in $ C_T^\eps\mathcal C^{\alpha+1} $ which is unique in $ C_T \mathcal C^{\alpha+1} $.
\end{theorem}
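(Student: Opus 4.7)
The overall plan is to apply Schaefer's fixed point theorem to the operator $J$ acting on the Banach space $X:= C_T^\eps\mathcal C^{\alpha+1}$. Recall that Schaefer's theorem asserts that a continuous and compact self-map of $X$ has a fixed point whenever the set $\Lambda$ defined in Lemma \ref{lm:J3} is bounded; and each of these three hypotheses has been prepared by the lemmas immediately preceding the theorem.

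For continuity, Lemma \ref{lm:J2} already gives that $J:X\to X$ is continuous. For compactness, the key observation is that Lemma \ref{lm:J1} shows that $J$ in fact takes values in the strictly smaller space $C_T^{\eps'}\mathcal C^{\alpha'+1}$ with $\eps'>\eps$ and $\alpha'>\alpha$, and that it maps bounded sets of $X$ into bounded sets of $C_T^{\eps'}\mathcal C^{\alpha'+1}$ (inspection of the right-hand side of \eqref{eq:J1} shows the bound only depends on $\|u\|_{C_T\mathcal C^{\alpha+1}}\leq \|u\|_{\eps,\alpha+1}$). Combining this with the compact embedding $C_T^{\eps'}\mathcal C^{\alpha'+1}\hookrightarrow C_T^\eps\mathcal C^{\alpha+1}$ mentioned in the paper — an Ascoli--Arzel\`a type statement ensuring that time-H\"older continuous functions with values in a space embedding compactly lose a small amount of regularity into a precompact set — yields that $J$ is compact as a map $X\to X$. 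Finally, the boundedness of the set $\Lambda$ in $X$ is exactly the content of Lemma \ref{lm:J3}, whose proof relies crucially on the a priori bound from Proposition \ref{pr:priori}, which in turn needed the sub-linear growth of $F$ granted by Assumption A4 together with a Gronwall argument.

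Once Schaefer's theorem is applied we obtain a fixed point $u\in X=C_T^\eps\mathcal C^{\alpha+1}$ of $J$, which by \eqref{eq: operator J}--\eqref{eq: operator I} is precisely a mild solution of \eqref{eq: PDE non lin Cauchy prb} in the sense of \eqref{eq: mild solution}. Since $C_T^\eps\mathcal C^{\alpha+1}\subset C_T\mathcal C^{\alpha+1}$, uniqueness in the larger space $C_T\mathcal C^{\alpha+1}$ follows directly from Remark \ref{rm:uniqueness1}: the uniqueness argument of Theorem \ref{thm: local fixed point for J} is driven by choosing $\rho_0$ sufficiently large relative to the common $C_T\mathcal C^{\alpha+1}$-bound of any two putative solutions, and this choice imposes no restriction on $T$, so the conclusion carries over to the present global setting.

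The main subtle point in turning this plan into a rigorous proof will be the verification that the inclusion $C_T^{\eps'}\mathcal C^{\alpha'+1}\hookrightarrow C_T^\eps\mathcal C^{\alpha+1}$ is genuinely compact (not merely bounded): on $\mathbb R^d$ one does not in general have compact embeddings between Besov spaces, so one has to exploit the time-regularity gain $\eps'-\eps>0$ to obtain equicontinuity in $t$, and combine it carefully with the spatial regularity gain $\alpha'-\alpha>0$ to extract convergent subsequences uniformly in $t$. Everything else reduces to invoking the preparatory lemmas, so the heart of the argument really lies in the interplay between the smoothing estimates for $J$ in Lemma \ref{lm:J1} and the a priori $C_T\mathcal C^{\alpha+1}$-bound of Proposition \ref{pr:priori}.
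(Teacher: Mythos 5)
Your proposal follows essentially the same route as the paper: invoke Schaefer's fixed point theorem on $C_T^\eps\mathcal C^{\alpha+1}$, using Lemma \ref{lm:J1} for smoothing into $C_T^{\eps'}\mathcal C^{\alpha'+1}$, Lemma \ref{lm:J2} for continuity, Lemma \ref{lm:J3} (via Proposition \ref{pr:priori}) for boundedness of $\Lambda$, and Remark \ref{rm:uniqueness1} for uniqueness in $C_T\mathcal C^{\alpha+1}$; note that the paper writes ``Schauder's fixed point theorem'' but the $\Lambda$-set formulation it uses is in fact Schaefer's, as you correctly identify. The compactness concern you flag at the end is a genuine subtle point that the paper does not address: it simply asserts that balls of $C_T^{\eps'}\mathcal C^{\alpha'+1}$ are pre-compact in $C_T^\eps\mathcal C^{\alpha+1}$, whereas on the unbounded domain $\mathbb R^d$ the spatial embedding $\mathcal C^{\alpha'+1}\hookrightarrow\mathcal C^{\alpha+1}$ is not compact (mass can escape to infinity), so a complete argument would need either an additional decay/weighted-space structure or a localisation argument beyond what is written.
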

\begin{proof}
\emph{Existence.} By Lemma \ref{lm:J1} we have that 
\[
J:  C_T^\eps\mathcal C^{\alpha+1} \to  C_T^\eps\mathcal C^{\alpha+1}
\]
and by Lemma \ref{lm:J2} we know that $J$ is also continuous. Moreover using  Lemma \ref{lm:J1} again we have that the operator $J$ maps balls of $ C_T^\eps\mathcal C^{\alpha+1}$ into balls of $C_T^{\eps'}\mathcal C^{\alpha'+1}$ for some $\eps'>\eps$ and $\alpha'>\alpha$, which are pre-compact sets in  $ C_T^{\eps}\mathcal C^{\alpha+1}$. Thus $J$ is compact. We conclude that $J$ has a fixed point $u^*$ in $ C_T^{\eps}\mathcal C^{\alpha+1}$ by Schauder's fixed point theorem and by Lemma \ref{lm:J3}. The fixed point $u^*$ is a mild solution of \eqref{eq: PDE non lin Cauchy prb} in $ C_T^\eps\mathcal C^{\alpha+1} $.\\
\emph{Uniqueness.} Clearly $u^*\in   C_T \mathcal C^{\alpha+1} $. This solution is unique in the latter space by Remark \ref{rm:uniqueness1}.
\end{proof}

\section{Applications to stochastic analysis}\label{sc: applications}

In this section we illustrate an application of non-linear singular PDEs   to stochastic analysis, in particular to a class of non-linear backward stochastic differential equations (BSDEs) with distributional coefficients. The class of BSDEs that we consider here has not been studied previously in the  BSDEs literature.

The concept of a  BSDE was introduced in the early 90s by Pardoux and Peng \cite{pardoux-peng}. Since then, BSDEs have become a popular research field and the literature on this topic is now vast, see for example two recent books \cite{pardoux-rascanu14, zhang} and references therein. BSDEs own their success to the many applications they have in other areas of research. The main ones are their use in financial mathematics for pricing and hedging derivatives; their application to stochastic control theory  to find the optimal control and the optimal value function; and their use in showing existence and uniqueness of solutions to certain classes of non-linear PDEs by means of a  probabilistic representation of their solution (known as non-linear Feynman-Kac formula). 

The application that we are going to illustrate below fits in the latter two of these three topics. Indeed, the singular PDE studied above will allow us to define and solve a singular BSDE which is linked to the PDE by an extended Feynman-Kac formula. Moreover this class of  BSDEs arises also in stochastic control when looking at problems in  Economics where an agent wants to  maximise her exponential utility, see for example \cite[Chapter 20]{bjork} and \cite[Chapter 7]{zhang}. 
This latter class of BSDEs is known as quadratic BSDEs and  is linked to the special non-linearity $F(x)=x^2$. Note that in this section we   restrict to one space dimension.  This restriction and the choice of quadratic $F$ are done to avoid technicalities, but it should be a simple exercise to extend the argument below to a  general non-linear $F$ satisfying Assumption A1 and such that $F(0)=0$. The multidimensional case $(d>1)$ should also be possible to treat, much in the spirit of \cite{IssoglioJing16}.  Details of this are left to the interested reader and to future work. 

\vspace{10pt}
 
Let us start by writing the PDE \eqref{eq: PDE non lin Cauchy prb} in one-dimension and backward in time, which is the classical  form (Kolmogorov backward equation) when dealing with BSDEs:
\begin{equation}\label{eq: PDE for BSDE}
\left\{\begin{array}{ll}
\partial_t u(t,x)  + \partial _{xx} u(t,x) + (\partial_x u(t,x))^2  b(t,x)=0, & \text{ for } (t,x)\in [0,T]  \times\mathbb R \\
u(T,x)=\Phi (x),  & \text{ for } x\in \mathbb R .
\end{array}\right.
\end{equation}
We observe that (by abuse of notation) we used the same symbol $u$ as in the forward PDE and we denoted by $\Phi$ rather than $u_0$ the final condition. This is done to  be in line with classical BSDEs notation. The results of Section \ref{sc: solving the PDE} and in particular  Theorem \ref{thm: fixed point for J for small u0} apply to this PDE because the only  difference from \eqref{eq: PDE non lin Cauchy prb} is the time-change. Indeed  it is easy to check that  $F(x)= x^2$ satisfies Assumption A1 and moreover $F(0)=0$. 
\begin{rem} 
Since here we want to work in a given time-interval $[0,T]$ then we must ensure that the terminal condition $\Phi$ is small enough according to Theorem \ref{thm: fixed point for J for small u0}.
\end{rem}
Given a probability space $(\Omega, \mathcal F, \mathbb P)$ we consider a  BSDE of the form
\begin{equation}\label{eq: BSDE}
Y_r^{t,x} = \Phi(B_T^{t,x}) +\int_r^T b(s, B_s^{t,x}) (Z^{t,x}_s)^2 \mathrm ds - \int_r^T Z^{t,x}_s \mathrm dB^{t,x}_s,
\end{equation}
where $B:=(B^{t,x}_r)_{t\leq r\leq T}$ is a Brownian motion starting in $x$ at time $t$ and with quadratic variation $2r$ at time $r\geq t$. This latter non-standard quadratic variation is introduced to account for the fact that the generator of  Brownian motion is $\frac12 \partial_{xx}$ but the operator in the PDE \eqref{eq: PDE for BSDE} is $\partial_{xx}$.  The Brownian motion $B$ generates a filtration $\mathbb F:=(\mathcal F_r)_{t\leq r\leq T }$. 
It is known that if $b$ and $\Phi$ are smooth enough functions and satisfy some bounds (see e.g. \cite[Theorem 7.3.3]{zhang}) then the solution to the BSDE exists and it is unique. Note that a solution to \eqref{eq: BSDE}  is a \emph{couple} of adapted processes $(Y^{t,x},Z^{t,x})$ that satisfies \eqref{eq: BSDE}  and some other integrability  conditions (like the ones in the second bullet point of Definition \ref{def: virtual solution} below).
Moreover it is know that, in the classical case, the  BSDE and the PDE above are linked via the Feynman-Kac formula, namely  $ Y^{t,x}_r = u(r, B_r^{t,x}), \text{ and } Z^{t,x}_r = \partial_x u(r, B_r^{t,x})$.\footnote{One side of the Feynman-Kac formula can be easily checked, namely that the couple $( u(r, B_r^{t,x}), \partial_x u(r, B_r^{t,x})) $ is a solution of the BSDE. This is done by applying It\^o's formula to $ u(r, B_r^{t,x})$.} In particular for the initial time $t$ one gets the stochastic representation for the solution of the PDE \eqref{eq: PDE for BSDE} in terms of the solution of the BSDE \eqref{eq: BSDE}, namely 
\[
u(t,x) = Y_t^{t,x}.
\]

In the remaining of this section  we are going to use the results on the singular parabolic PDE to solve  the singular BSDE \eqref{eq: BSDE} when $b\in L_T^\infty \mathcal C^{\beta}$. 
One of the delicate points here is to give a meaning to the term $\int_r^T b(s, B_s) Z_s^2 \mathrm ds $, which we do by using the \emph{It\^o trick}. The {It\^o trick} has been used in the past to  treat other SDEs and BSDEs with distributional coefficients, see e.g.\  \cite{flandoli_et.al, IssoglioJing16}. This trick makes use of the following auxiliary PDE
 \begin{equation}\label{eq: auxiliary PDE for BSDE}
\left\{\begin{array}{ll}
\partial_t w(t,x)  +  \partial_{xx} w(t,x) = (\partial_x u(t,x))^2  b(t,x), & \text{ for } (t,x)\in [0,T]  \times\mathbb R \\
w(T,x)=0, & \text{ for } x\in \mathbb R ,
\end{array}\right.
\end{equation}
where the function $u$ appearing on the right-hand side is the solution to \eqref{eq: PDE for BSDE}. The mild form of this PDE is given  by
\begin{equation*}\label{eq: mild solution w}
w(t) = - \int_t^T P_{s-t} \left( ( \partial_x u(s))^2  b(s) \right) \mathrm ds.
\end{equation*}

Let us now do some \emph{heuristic} reasoning. If $b$ was smooth, then applying It\^o's formula to $w(r, B^{t,x}_r)$  would give 
\begin{align*}
\int_r^T \mathrm d  w(s, B_s^{t,x}) =& \int_r^T \partial_t w(s, B_s^{t,x}) \mathrm d s +  \int_r^T \partial_x w(s, B_s^{t,x}) \mathrm dB^{t,x}_s \\
&+ \frac12 \int_r^T \partial _{xx} w(s, B^{t,x}_s) 2\mathrm ds\\
=& \int_r^T\partial_x w(s, B^{t,x}_s) \mathrm dB^{t,x}_s + \int_r^T(\partial_x u(s,B^{t,x}_s))^2  b(s,B^{t,x}_s) \mathrm ds.
\end{align*} 
Moreover, if $b$ was smooth, then the classical theory on BSDEs ensures that  $Z_r = \partial_x u(r, B^{t,x}_r)$, so integrating the above equation one has
\begin{align*}
w(T, B_T)- w(r, B_r^{t,x}) &=  \int_r^T \partial_x w(s, B^{t,x}_s) \mathrm dB^{t,x}_s + \int_r^T  (Z^{t,x}_s)^2  b(s,B_s^{t,x}) \mathrm ds.
\end{align*} 
Thus we can express the singular term including $b$ in terms of quantities that are well defined and do not depend on $b$ explicitly, namely
\begin{equation}\label{eq: virtual term} 
 \int_r^T  (Z^{t,x}_s)^2  b(s,B_s^{t,x}) \mathrm ds = - w(r, B_r^{t,x}) - \int_r^T \partial_x w(s, B^{t,x}_s) \mathrm dB^{t,x}_s .
\end{equation}
We note that even in the singular case when $b\in L_T^\infty \mathcal C^\beta$ we have that all terms on the right hand side of \eqref{eq: virtual term} are well defined. Indeed using  the regularity of $u$, $b$ and their product (see \eqref{eq: Bony's estimates}) together with Lemma \ref{lm: continuity of I} one has that  $w\in C_T\mathcal C^{\alpha+1}$ and therefore  $w$ is differentiable  (in the classical sense) once in $x$, so $\partial_x w (s,x)$ is well defined.

The idea of the It\^o trick is to ``replace'' the singular integral term with the right-hand side of \eqref{eq: virtual term}, which is the motivation for the following definition. Note that we drop the superscript $\cdot^{t,x}$ for ease of notation. 
\begin{defin}\label{def: virtual solution}
A couple $(Y,Z)$ is called \emph{virtual solution} of \eqref{eq: BSDE} if
	\begin{itemize}
		\item $Y$ is continuous and $\mathbb F$-adapted and $Z$ is $\mathbb F$-progressively measurable; 
		\item  $ E \left[ \sup_{r\in[t,T]}|Y_r|^2 \right ] < \infty$ and $ E\left[ \int_t^T |Z_r|^2 \mathrm dr\right] < \infty$;
		\item for all $r\in[t,T]$, the couple satisfies the following  backward SDE 
		\begin{align}
			\label{eq: BSDE virtual}
			{Y}_r= & \ \Phi(B_T)- w(r, B_r) - \int^T_r ({Z}_s +  \partial_x w(s, B_s)  )\mathrm d  B_s
		\end{align}
		$\mathbb P$-almost surely.
	\end{itemize}
\end{defin}

We now observe that BSDE \eqref{eq: BSDE virtual} can be transformed into a classical BSDE by setting $\hat Y_r := Y_r+ w(r, B_r)$ and $\hat Z_r := Z_r + \partial_x w(r, B_r)$. One has that \eqref{eq: BSDE virtual} is equivalent to 
\begin{equation}\label{eq: BSDE virtual transformed}
\hat Y_r=   \Phi(B_T) - \int^T_r  \hat Z_s    \mathrm d  B_s,
\end{equation}
thus the $\hat Y$ component in \eqref{eq: BSDE virtual transformed} is given explicitly  by $\hat Y_r = \mathbb E\left [ \Phi (B_T) \vert \mathcal F_r \right ]$.
Moreover by the martingale representation theorem (see e.g.\ \cite[Theorem 2.5.2]{zhang}) there exists a unique predictable process  $\hat Z$ such that $ \hat Y_r =   \hat Y_t + \int^r_t  \hat Z_s    \mathrm d  B_s$ and so  $\hat Y_r=  \hat Y_T - \int^T_r  \hat Z_s    \mathrm d  B_s $.
Therefore given the transformation $w$, we can find explicitly the virtual solution of \eqref{eq: BSDE} by 
\begin{equation}\label{eq: sol BSDE}
 Y_r = \mathbb E\left [ \Phi (B_T) \vert \mathcal F_r \right ] -  w(r, B_r),
 \text{ and }
 Z_r = \hat Z_r -  \partial_x w(r, B_r).
\end{equation}
What we explained above can be summarised in the following theorem.
\begin{theorem}
If $b\in L_T^\infty\mathcal C^\beta$, then there exists a unique virtual solution $(Y,Z)$ of \eqref{eq: BSDE} given by \eqref{eq: sol BSDE}.
\end{theorem}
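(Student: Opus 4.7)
The plan is to decompose the proof into three stages: first establish existence of the auxiliary PDE solution $w$, then show the equivalence between virtual solutions of \eqref{eq: BSDE} and solutions of a classical BSDE via the transformation $(\hat Y, \hat Z) := (Y + w(\cdot, B_\cdot), Z + \partial_x w(\cdot, B_\cdot))$, and finally solve that classical BSDE uniquely by the martingale representation theorem.

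First I would invoke Theorem \ref{thm: fixed point for J for small u0} (noting that $F(x) = x^2$ satisfies A1 with $F(0)=0$) to obtain a unique $u \in C_T \mathcal C^{\alpha+1}$ solving \eqref{eq: PDE for BSDE}, provided $\Phi$ is small enough in $\mathcal C^{\alpha+1}$. Then $\partial_x u \in C_T \mathcal C^\alpha$, and by Bony's estimates \eqref{eq: Bony's estimates}, $(\partial_x u)^2 b \in L_T^\infty \mathcal C^\beta$. Applying Lemma \ref{lm: continuity of I} to the time-reversed integral gives $w \in C_T \mathcal C^{\alpha+1}$, so $w$ is $C^1$ in space and $\partial_x w$ is bounded.

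Next I would verify the equivalence. Given a virtual solution $(Y,Z)$ satisfying Definition \ref{def: virtual solution}, define $\hat Y_r := Y_r + w(r, B_r)$ and $\hat Z_r := Z_r + \partial_x w(r, B_r)$. Since $w(T,\cdot) \equiv 0$, substituting into \eqref{eq: BSDE virtual} yields the classical linear BSDE \eqref{eq: BSDE virtual transformed} with terminal condition $\Phi(B_T)$ and zero driver. Conversely, any solution of \eqref{eq: BSDE virtual transformed} gives back a virtual solution by reversing the transformation, and the integrability requirements in Definition \ref{def: virtual solution} transfer in both directions because $w$ and $\partial_x w$ are bounded uniformly on $[0,T]\times\mathbb R$, so the $L^2$ norms of $Y,Z$ and $\hat Y,\hat Z$ are comparable (modulo a constant depending on $\|w\|_{C_T \mathcal C^{\alpha+1}}$).

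Then the classical BSDE \eqref{eq: BSDE virtual transformed} has the standard explicit solution: $\hat Y_r = \mathbb E[\Phi(B_T) \mid \mathcal F_r]$ is a continuous square-integrable martingale (using that $\Phi$ is bounded, being an element of $\mathcal C^{\alpha+1}$), and by the martingale representation theorem there exists a unique $\mathbb F$-predictable $\hat Z \in L^2([t,T]\times\Omega)$ with $\hat Y_r = \hat Y_t + \int_t^r \hat Z_s \, \mathrm d B_s$; rearranging gives \eqref{eq: BSDE virtual transformed}. Inverting the transformation yields \eqref{eq: sol BSDE}. Uniqueness of the virtual solution follows from uniqueness of $(\hat Y, \hat Z)$ together with the uniqueness of $w$ (which is determined by $u$, which itself is unique by Theorem \ref{thm: fixed point for J for small u0}).

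The main obstacle I anticipate is the careful bookkeeping of the integrability and measurability conditions across the transformation — in particular, checking that $\partial_x w(\cdot, B_\cdot)$ is indeed a well-defined progressively measurable process in $L^2([t,T]\times\Omega)$, which boils down to continuity of $\partial_x w$ in $(s,x)$ and boundedness. Both follow from $w \in C_T \mathcal C^{\alpha+1}$ with $\alpha+1 > 1$, so $\partial_x w$ is jointly continuous and uniformly bounded, which makes the It\^o trick \eqref{eq: virtual term} rigorous without any additional stochastic integration theory beyond the classical one.
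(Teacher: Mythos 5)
Your proposal matches the paper's own argument almost step for step: invoke Theorem \ref{thm: fixed point for J for small u0} (with $F(x)=x^2$, $F(0)=0$, and $\Phi$ small) to get the unique $u$, build $w$ from the auxiliary PDE \eqref{eq: auxiliary PDE for BSDE}, shift by $(w(\cdot,B_\cdot),\partial_x w(\cdot,B_\cdot))$ to reduce \eqref{eq: BSDE virtual} to the driverless BSDE \eqref{eq: BSDE virtual transformed}, solve that by the martingale representation theorem, and invert the shift to obtain \eqref{eq: sol BSDE}. The only addition you make beyond the paper is the brief verification that the integrability and measurability requirements in Definition \ref{def: virtual solution} transfer through the transformation, which the paper leaves implicit.
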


\begin{rem}
It is easy to check that the notion of virtual solution coincides with the classical solution when $b$ is smooth, because the heuristic argument explained above to motivate \eqref{eq: virtual term} is actually rigorous. Indeed this is the case if $b\in L_T^\infty\mathcal C^{\beta}$ is also a function smooth enough so that $u\in C^{1,2}$ and so that the BSDE can be solved with classical theorems (see e.g.\ \cite[Chapter 7]{zhang}).
\end{rem}

The notion of virtual solution for BSDEs has been previously  used in \cite{IssoglioJing16} for the linear case when $F(x) = x $. There the authors show existence and uniqueness of a virtual solution for the corresponding BSDE similarly as what has been done here but for a slightly different class of drifts that live in Triebel-Lizorkin spaces rather than  Besov spaces. Moreover for the linear case $F(x)=x$ it has been shown in \cite{issoglio_russo} that the virtual solution introduced in \cite{IssoglioJing16} indeed coincides with a solution to the BSDE defined directly (hence by giving a meaning to the singular term instead of replacing it with known terms via  the It\^o trick). This was achieved with the introduction of an integral operator $A$ to represent the singular integral. 

It will be objective of future research to investigate the existence of an integral operator $A$ related to the non-linear term $F(x)$ analogously to the integral operator introduced in \cite{issoglio_russo}, and give a meaning to the BSDE directly rather than via the It\^o trick as done here. 
 
\section*{Acknowledgment}
The author would like to thank the anonymous referee for providing useful comments and hints, in particular regarding Section \ref{sc:global}.



\begin{thebibliography}{20}
\bibitem{bahouri} Bahouri H., Chemin J-Y.,   Danchin R., \emph{Fourier Analysis and Nonlinear Partial Differential Equations} Springer (2011)
\bibitem{ballieul_et.al} Bailleul I., Debusshe A., Hofmanova M., Quasilinear generalized parabolic Anderson model equation, \emph{ArXiv e-prints} (2016). http://arxiv.org/abs/1610.06726
\bibitem{bjork} Bj{\"o}rk T.,  \emph{Arbitrage theory in continuous time} (2009) Oxford University Press
\bibitem{bony}  Bony J.-M., Calcul symbolique et propagation des singularites pour les \'{e}quations aux
d\'{e}riv\'{e}es partielles non lin\'{e}aires, \emph{Ann. Sci. Ec. Norm. Super.} (4) 14 (1981), 209--246
\bibitem{cannizzaro} Cannizzaro, G., Chouk, K., Multidimensional SDEs with singular drift and universal construction of the polymer measure with white noise potential, \emph{Ann. Probab. 46} (2018), no. 3, 1710--1763
\bibitem{diehl_friz12} Diehl, J., Friz P.,  Backward stochastic differential equations with rough drivers, \emph{Ann. Probab.} 40, 1715--1758 (2012)
\bibitem{diehl_zhang17} Diehl, J., Zhang J., Backward stochastic differential equations with Young drift,  \emph{Probability, Uncertainty and Quantitative Risk} (2017) \textbf{2}:5 
\bibitem{eddahbi} Eddahbi M., S\`ene A., Quadratic BSDEs with rough drivers and $L^2$-terminal condition, \emph{ArXiv Preprint} (2014) http://arxiv.org/abs/1403.2998
\bibitem{flandoli_et.al} Flandoli F.,  Issoglio E., Russo F., {Multidimensional stochastic differential equations with distributional drift}, \emph{Trans. Amer. Math. Soc.} (2017) vol  369, pp 1665--1688
\bibitem{furlan_gubinelli} Furlan M., Gubinelli M., Paracontrolled quasilinear SPDEs. \emph{Ann. Probab.} Volume 47, Number 2 (2019), 1096--1135 
\bibitem{gubinelli04} Gubinelli M.,  Controlling rough paths, \emph{J. Funct. Anal.}, (2004) \textbf{216}(1) 86--140
\bibitem{gubinelli-imkeller-perkowski} Gubinelli M., Imkeller P., Perkowski N., Paracontrolled distributions and singular {PDE}s, \emph{Forum of Mathematics, Pi}, (2015), Vol. 3, e6, 75 pages
\bibitem{gerencser_hairer} Gerencs\'er M.,  Hairer M., A solution theory for quasilinear singular SPDEs, \emph{Commun. Pure Appl. Math.} (2019), available online 8 February 2019 
\bibitem{gubinelli-perkowski15} Gubinelli M., Perkowski N., \emph{Lectures on singular stochastic PDEs},  Ensaios Math. (2015), Vol 29, 1--89
\bibitem{hairer13}  Hairer M., Solving the KPZ equation, \emph{Ann. Math.}, (2013) 178(2), pp 559--664
\bibitem{hairer14} Hairer M., A theory of regularity structures, \emph{Invent. math.}, (2014) 198(2), 269--504
\bibitem{hinz_et.al} Hinz M.,  Issoglio E., Z\"ahle M.,  Elementary Pathwise Methods for Nonlinear Parabolic and Transport Type Stochastic Partial Differential Equations with Fractal Noise, \emph{Modern Stochastics and Applications. Springer Optimization and Its Applications}, (2014) vol 90. Springer, Cham
\bibitem{hinz_zahle} Hinz M.,   Z\"ahle M.,  Gradient type noises II - Systems of stochastic partial differential equations, {\em J. Funct. Anal.}, (2009), Vol 256(10), pp 3192--3235
\bibitem{issoglio13} Issoglio E., {Transport Equations with Fractal Noise - Existence, Uniqueness and Regularity of the Solution}, \emph{J. Analysis and its App.} (2013) vol 32(1), pp 37--53
\bibitem{IssoglioJing16} Issoglio E., Jing S., Forward-Backward SDEs with distributional coefficients, \emph{Stochastic Processes and their Applications} (2019), available online 14 January 2019
\bibitem{issoglio_russo}  Issoglio E., Russo F., A Feynman-Kac result via Markov BSDEs with generalised drivers,  \emph{ArXiv Preprint} (2018) http://arxiv.org/abs/1805.02466, 37 pp
\bibitem{issoglio_zahle}  Issoglio E., Z\"ahle M., Regularity of the solutions to SPDEs in metric measure spaces, \emph{Stoch. PDE: Anal. Comp.}   (2015), vol 3(2), pp 272--289.
\bibitem{kobylanski} Kobylanski M., Backward stochastic differential equations and partial differential equations with quadratic growth, \emph{Ann. Probab.} \textbf{28}(2) 558--602 (2000) 
\bibitem{otto_et.al} Otto F., Sauer J., Smith S., Weber H. Parabolic equations with low regularity coefficients and rough forcing \emph{ArXiv Preprint} (2017). http://arxiv.org/abs/1803.07884
\bibitem{otto_weber} Otto F.,  Weber H., Quasilinear SPDEs via rough paths, \emph{Arch. Rational Mech. Anal.} 232 (2019) 873--950
\bibitem{pardoux-peng} Pardoux \'E., Peng S., Adapted solution of a backward stochastic differential equation,  \emph{Systems Control Lett.}, 1(14):55--61, 1990 
\bibitem{pardoux-rascanu14} Pardoux E., Rascanu A., \emph{Stochastic Differential Equations, Backward SDEs, Partial Differential Equations}, Springer (2014) 
\bibitem{triebel10} Triebel H., \emph{Bases in Function Spaces, Sampling, Discrepancy, Numerical Integration}, EMS Tracts in Mathematics 11, 2010
\bibitem{zhang} Zhang, J., \emph{Backward stochastic differential equations, from linear to fully nonlinear theory}, Springer (2017)
\end{thebibliography}
\end{document}